\theoremstyle{definition}
\newtheorem{theorem}{Theorem}[section]
\newtheorem{lemma}[theorem]{Lemma}
\theoremstyle{definition}
\newtheorem{definition}[theorem]{Definition}
\newtheorem{c-example}[theorem]{Counter-example}
\newtheorem{Lemma}[theorem]{Lemma}
\newtheorem{corollary}[theorem]{Corollary}
\newtheorem{Prop}[theorem]{Proposition}
\theoremstyle{remark}
\newtheorem{remark}[theorem]{Remark}
\newtheorem*{theo-non}{\bf Theorem 2.3}
\numberwithin{equation}{section}
\newcommand{\Cal}[1]{{\mathcal #1}}
\newcommand{\paral}[1]{\ar@<0.3ex>[#1] \ar@<-0.3ex>[#1]}
\newcommand{\PreOrd}[1]{\mathsf{PreOrd}(\mathbb{#1})}
\DeclareMathOperator{\Sub}{\mathsf{Sub}}
\begin{document}

 \title{The stable category of preorders in a pretopos II: the universal property}
    \author{Francis Borceux}
\address{Universit\'e catholique de Louvain, Institut de Recherche en Math\'ematique et Physique, 1348 Louvain-la-Neuve, Belgium}
\email{francis.borceux@uclouvain.be}

\author{Federico Campanini}
\address{Universit\`a degli Studi di Padova, Dipartimento di Matematica ``Tullio Levi-Civita'', 35121 Padova, Italy}
\email{federico.campanini@unipd.it}

\author{Marino Gran}
\address{Universit\'e catholique de Louvain, Institut de Recherche en Math\'ematique et Physique, 1348 Louvain-la-Neuve, Belgium}
\email{marino.gran@uclouvain.be}

\thanks{
The second author was partially supported by Fondazione Ing. Aldo Gini - Universit\`a di Padova, borsa di studio per l'estero bando anno 2019 and by Ministero dell'Istruzione, dell'Universit\`a e della Ricerca (Progetto di ricerca di rilevante interesse nazionale ``Categories, Algebras: Ring-Theoretical and Homological Approaches (CARTHA)'').\\
This work was also supported by the collaboration project \emph{Fonds d'Appui à l'Internationalisation} ``Coimbra Group'' (2018-2021) funded by the Universit\'e catholique de Louvain.}



\begin{abstract} 
We prove that the \emph{stable category} associated with the category $\mathsf{PreOrd}(\mathbb C)$ of internal preorders in a pretopos $\mathbb C$ satisfies a universal property. The canonical functor from $\mathsf{PreOrd}(\mathbb C)$ to the stable category $\mathsf{Stab}(\mathbb C)$ universally transforms a pretorsion theory in $\mathsf{PreOrd}(\mathbb C)$ into a classical torsion theory in the pointed category $\mathsf{Stab}(\mathbb C)$. This also gives a categorical insight into the construction of the stable category first considered by Facchini and Finocchiaro in the  special case when $\mathbb C$ is the category of sets.
\end{abstract}

\maketitle
\section*{Introduction}
This article is meant as the sequel of \cite{BCG} and it deals with the study of the universal property of the $\emph{stable category}$ $\mathsf{Stab}(\mathbb C)$ of the category $\mathsf{PreOrd}(\mathbb C)$ of internal preorders  in a pretopos $\mathbb C$. It reveals the categorical feature of a natural construction 
due to A. Facchini and C. Finocchiaro in the category $\mathsf{PreOrd}$ of preordered sets  \cite{FF}, that we first briefly recall.

The category $\mathsf{PreOrd}$ contains the full subcategories $\mathsf{Eq}$ and $\mathsf{ParOrd}$ 
whose objects are equivalence relations and partially ordered sets, respectively. The pair of categories $(\mathsf{Eq}, \mathsf{ParOrd})$ has two properties making it a pretorsion theory in $\mathsf{PreOrd}$ \cite{FF, FFG}, that is, a kind of ``non-pointed  torsion theory''. More explicitly, any preordered set $(A, \rho)$, where $\rho$ is a reflexive and transitive relation on the set $A$, determines an equivalence relation $(A, \sim_{\rho})$, where $\sim_{\rho} = \rho \cap {\rho}^o$ and $\rho^o$ is the opposite relation of $\rho$, and a partially ordered set $({A/\sim_{\rho}}, \pi({\rho}))$, where $ \pi \colon A \rightarrow A/\sim_{\rho}$ is the quotient of $A$ by the equivalence relation $\sim_{\rho}$, and $\pi({\rho})$ is the partial order induced by $\rho$ on the quotient $A/\sim_{\rho}$. This yields a short $\mathcal Z$-exact sequence 
$$\xymatrix{
(A, \sim_{\rho}) \ar[r]^{\mathsf{Id}_A} & (A,\rho)\ar[r]^-\pi & ({A}/{\sim_{\rho}}, \pi(\rho))\\
} \qquad \quad (\mathsf{SES})$$
where $\mathcal Z$ is the full subcategory of $\mathsf{PreOrd}$ whose objects are the ``trivial preorders'' $(B, =)$, with $B$ a set and $=$ the equality relation on $B$. This subcategory $\mathcal Z$ determines an \emph{ideal} of trivial morphisms \cite{Ehr}, where a morphism is called \emph{trivial} if it factors through a trivial object. The fact that the above sequence is $\mathcal Z$-exact means that the identity morphism $\mathsf{Id}_A$ above is the $\mathcal Z$-kernel of $\pi$, and the quotient $\pi$ is the $\mathcal Z$-cokernel of $\mathsf{Id}_A$, where the notions of $\mathcal Z$-kernel and $\mathcal Z$-cokernel are defined by the same universal properties characterizing usual kernels and cokernels, with the only difference that the ideal of $0$-morphisms is replaced by the ideal of trivial morphisms \cite{FFG}. The $\mathcal Z$-exact sequence $(\mathsf{SES})$ has the property that the $\mathcal Z$-kernel belongs to $\mathsf{Eq}$ (the ``torsion subcategory'') and the $\mathcal Z$-cokernel belongs to $\mathsf{ParOrd}$ (the ``torsion-free subcategory''). Furthermore, one easily sees that any order preserving morphism from an equivalence relation to a partial order is trivial. These two properties express the fact that $(\mathsf{Eq}, \mathsf{ParOrd})$ is a pretorsion theory in $\mathsf{PreOrd}$.

In their study of this pretorsion theory in $\mathsf{PreOrd}$ the authors of \cite{FF} introduced a new category, the stable category $ \mathsf{Stab}$ of preordered sets: this is a pointed category, arising as a quotient category, with the property that the canonical functor from $\mathsf{PreOrd}$ to $\mathsf{Stab}$ sends the trivial objects in $\mathcal Z$ to the zero object in $\mathsf{Stab}$, and any trivial morphism in  $\mathsf{PreOrd}$ to a zero morphism in $\mathsf{Stab}$.

In the first article \cite{BCG} of this series we proved that, whenever $\mathbb C$ is a coherent category \cite{Elep}, it is possible to give a purely categorical construction of the stable category $\mathsf{Stab}(\mathbb C)$ of the category $\mathsf{PreOrd}(\mathbb C)$ of internal preorders in $\mathbb C$ (we recall this construction in the first section of this article). Moreover, when $\mathbb C$ is a pretopos, the functor $\Sigma \colon \mathsf{PreOrd}(\mathbb C) \rightarrow \mathsf{Stab}(\mathbb C)$ preserves coproducts and sends short $\mathcal Z$-exact sequences in $\mathsf{PreOrd}(\mathbb C)$ to short exact sequences in the pointed category $\mathsf{Stab}(\mathbb C)$ (Theorem $7.14$ in \cite{BCG}). 

The aim of this article is to prove the universal property of the stable category $\mathsf{Stab}(\mathbb C)$, that relies on these two properties of the functor $\Sigma \colon \mathsf{PreOrd}(\mathbb C) \rightarrow \mathsf{Stab}(\mathbb C)$. If we call a functor $G \colon \mathsf{PreOrd}(\mathbb C) \rightarrow \mathbb X$ a \emph{torsion theory functor} (Definition \ref{tt-functor}) when it sends the torsion and the torsion-free subcategories of the pretorsion theory $(\mathsf{Eq} (\mathbb C), \mathsf{ParOrd}(\mathbb C))$ into the torsion and the torsion-free subcategory, respectively, of a torsion theory $({\mathcal T}, {\mathcal F})$ in the category $\mathbb X$, the universal property can be expressed as follows:\\

\textbf{Theorem~\ref{universal}:} The canonical functor $\Sigma \colon \mathsf{PreOrd}(\mathbb C) \rightarrow \mathsf{Stab}(\mathbb C)$ is universal among all finite coproduct preserving torsion theory functors $G \colon \mathsf{PreOrd}(\mathbb C) \rightarrow \mathbb{X}$, where $\mathsf{PreOrd}(\mathbb C)$ is equipped with the pretorsion theory $(\mathsf{Eq}(\mathbb C), \mathsf{ParOrd}(\mathbb C))$, and
$\mathbb X$ is a pointed category with coproducts equipped with a torsion theory $({\mathcal T}, {\mathcal F})$. This means that any finite coproduct preserving torsion theory functor $G \colon \mathsf{PreOrd}(\mathbb C) \rightarrow \mathbb X$ factors uniquely through $\Sigma$:
$$
\xymatrix{\mathsf{PreOrd}(\mathbb C)  \ar[rr]^{\Sigma} \ar[rd]_{\forall G} & & \mathsf{Stab}(\mathbb C) \ar@{.>}[ld]^{\exists ! \overline{G}} \\
& \mathbb X, &
}
$$
i.e. there is a unique functor $\overline{G}$ such that $\overline{G} \cdot  \Sigma = G$.
The induced functor $\overline{G}$ preserves finite coproducts, and it is a torsion theory functor.\\

This theorem reveals the nature of the stable category, namely to transform a pretorsion theory in the sense of \cite{FFG} into a ``classical'' torsion theory, universally. Note that some further properties of the stable category $\mathsf{Stab}(\mathbb C)$ can be established when the base category $\mathbb C$ is what we called a \emph{$\tau$-pretopos} in \cite{BCG}, that is a pretopos with the additional property that the transitive closure of any relation on an object exists. Under this assumption it is possible to show that, for any ``suitable'' category $\mathbb X$, the induced torsion theory functor $\overline{G}$ above preserves kernels and cokernels, hence in particular short exact sequences (Theorem \ref{universal-tau}).

\section{Preliminaries}
In this work we shall be mainly interested in the category $\mathsf{PreOrd}(\mathbb C)$ of internal preorders in a pretopos $\mathbb C$. In \cite{FFG2} it was proven that the there is a pretorsion theory $(\mathsf{Eq}(\mathbb C), \mathsf{Par}(\mathbb C))$ in $\mathsf{PreOrd}(\mathbb C)$. In order to make the present paper as self-contained as possible, we now recall all the definitions needed in the sequel.

\subsection*{Pretorsion theories}
We first briefly recall the definition of pretorsion theory for general categories, as defined in \cite{FF, FFG}. Let $\Cal C$ be an arbitrary category and consider a pair $(\Cal T, \Cal F)$ of two replete full subcategories of $\Cal C$. Set also $\Cal Z:=\Cal T\cap\Cal F$ and call it \emph{the class of ``trivial objects"}. A morphism $f\colon A\to A'$ in $\Cal C$ is $\Cal Z$-trivial if it factors through an object of $\Cal Z$. Notice that the class of trivial morphisms in $\Cal C$ is an ideal of morphisms in the sense of Ehresmann \cite{Ehr}, and thus it is possible to consider the notions of $\mathcal Z$-kernel and of $\mathcal Z$-cokernel, defined by replacing, in the definition of kernel and cokernel, the ideal of zero morphisms with the ideal of trivial morphisms induced by the subcategory $\mathcal Z$. More precisely, we say that a morphism $\varepsilon\colon X\to A$ in $\Cal C $ is a \emph{$\Cal Z$-kernel} of $f\colon A \to A'$ if $f\varepsilon$ is a $\Cal Z$-trivial morphism and whenever $\lambda \colon Y\to A$ is a morphism in $\Cal C$ and $f\lambda$ is $\Cal Z$-trivial, then there exists a unique morphism $\lambda'\colon Y\to X$ in $\Cal C$ such that $\lambda=\varepsilon\lambda'$.
The notion of \emph{$\Cal Z$-cokernel} is defined dually. A sequence $A\overset{f}{\to}B\overset{g}{\to}C$ is a \emph{short $\Cal Z$-exact sequence} if $f$ is the $\Cal Z$-kernel of $g$ and $g$ is the $\Cal Z$-cokernel of $f$. We say that the pair $(\Cal T,\Cal F)$ is a \emph{pretorsion theory} in $\Cal C$ if the following two properties are satisfied:
\begin{itemize}
\item
any morphism from an object $T\in\Cal T$ to an object $F\in\Cal F$ is $\Cal Z$-trivial;
\item
for every object $X$ of $\Cal C$ there is a short $\Cal Z$-exact sequence
$$\xymatrix{ T_X \ar[r]^f &  X \ar[r]^g &  F_X}$$ with $T_X\in\Cal T$ and $F_X\in\Cal F$.
\end{itemize}

\subsection*{Pretoposes}
In this article $\mathbb C$ will always be assumed to be a \emph{pretopos} (see \cite{Elep} for more details). Let us recall that $\mathbb C$ is a pretopos when 
\begin{itemize}
\item $\mathbb C$ is \emph{exact} (in the sense of Barr \cite{Barr}), \item $\mathbb C$ has finite sums (= coproducts), \item $\mathbb C$ is \emph{extensive} \cite{CLW}. \end{itemize}
The property of extensivity means that
$\mathbb C$ has pullbacks along coprojections in a sum and the following condition holds: in any commutative diagram, where the bottom row is the sum of $A$ and $B$
$$
\xymatrix{
A' \ar[r] \ar[d] & C \ar[d] & B' \ar[l] \ar[d]\\
A \ar[r]_-{s_1} & A \coprod B & B,\ar[l]^-{s_2}
}
$$
the top row is a sum if and only if the two squares are pullbacks.  
The property saying that the upper row of the diagram is a sum whenever the two squares are pullbacks is usually called the ``universality of sums". 

 Recall that a sum of two objects $A$ and $B$ is called \emph{disjoint} if the coprojections $s_1 \colon A\to A \coprod B$ and $s_2 \colon B\to A \coprod B$ are monomorphisms and their intersection $A\cap B$ in the pullback
 $$
 \xymatrix{A \cap B \ar[r] \ar[d] & B \ar[d]^-{s_2} \\
 A \ar[r]_-{s_1} & A \coprod B
 }
 $$
  is an initial object in the category $\Sub(A\coprod B)$ of subobjects of $A \coprod B$. 
For a finitely complete category $\mathbb C$ with finite sums, extensivity is equivalent to the property of having disjoint and universal finite sums. In a pretopos the supremum $A \cup B$ of two disjoint subobjects $A \rightarrow X$ and $B \rightarrow X$ is given by the coproduct $A \coprod B$ of these two objects in $\mathbb C$ (see Corollary $1.4.4$ in \cite{Elep}).
Recall also that any pretopos has a \emph{strict} initial object, namely an initial object $0$ with the property that any morphism with codomain $0$ is an isomorphism.

\subsection*{Internal preorders}
As already said, in this work we shall be mainly interested in the category $\mathsf{PreOrd}(\mathbb C)$ of internal preorders in a pretopos $\mathbb C$, that is defined as follows.

An object $(A, \rho)$ in $\mathsf{PreOrd}(\mathbb C)$ is a relation $\langle r_1,r_2 \rangle \colon \rho \rightarrow A \times A$ on $A$, i.e. a subobject of $A \times A$, that is \emph{reflexive}, i.e. it contains the ``discrete relation'' $\langle 1_A, 1_A \rangle \colon A \rightarrow A \times A$ on $A$ (also denoted by $\Delta_A$), 
and \emph{transitive}: there is a
morphism $\tau \colon \rho \times_A \rho \rightarrow \rho$ such that $r_1 \tau =  r_1 p_1$ and $r_2 \tau =  r_2 p_2$, where $(\rho \times_A \rho, p_1, p_2)$ is the pullback
$$
\xymatrix{ \rho \times_A \rho \ar[r]^-{p_2} \ar[d]_-{p_1}& \rho \ar[d]^{r_1} \\
\rho \ar[r]_{r_2} & A.
}
$$ 

A morphism $(A, \rho) \rightarrow  (B, \sigma)$ in the category $\mathsf{PreOrd}(\mathbb C)$ of preorders in $\mathbb C$ is a pair of morphisms $(f,\hat{f})$ in $\mathbb C$ making the following diagram commute
$$
\xymatrix{  \rho \ar@<.5ex>[d]^{r_2} \ar@<-.5ex>[d]_{r_1}  \ar[r]^{\hat{f}}  & \sigma \ar@<.5ex>[d]^{s_2} \ar@<-.5ex>[d]_{s_1}  \\
 A \ar[r]_{f} & {B,} &
}
$$
so that $f r_1= s_1 \hat{f} $ and $f r_2= s_2 \hat{f}$.

A preorder $(A, \rho)$ is called an \emph{equivalence relation} if there is a ``symmetry'', namely a morphism $s \colon \rho \rightarrow \rho$ such that $r_1 s = r_2$  and $r_2 s = r_1$.  Equivalently, the opposite relation $\rho^\circ$ of $\rho$ is isomorphic to $\rho$, hence they determine the same subobject of $A \times A$: $\rho^\circ = \rho$. 
A preorder $(A, \rho)$ is called a \emph{partial order} if it ``antisymmetric'', i.e. if it has the additional property that $\rho \cap \rho^\circ$ is equal to the discrete equivalence relation $\Delta_A$ on $A$. We write $\mathsf{Eq}(\mathbb C)$ and $\mathsf{Par}(\mathbb C)$ for the full (replete) subcategories of $\mathsf{PreOrd}(\mathbb C)$ whose objects are equivalence relations and partial orders in $\mathbb C$, respectively. The pair $(\mathsf{Eq}(\mathbb C), \mathsf{Par}(\mathbb C))$ is a \emph{pretorsion theory} in $\mathsf{PreOrd}(\mathbb C)$ \cite{FFG2}.
We write $\mathcal Z =\mathsf{Eq}(\mathbb C) \cap  \mathsf{Par}(\mathbb C)$ for the full (replete) subcategory of \emph{trivial objects} in $\mathsf{PreOrd}(\mathbb C)$ \cite{BCG}, whose objects are ``discrete'' preorders, i.e. those of the form $(A, \Delta_A)$.
A morphism $(f,\hat{f}) \colon (A, \rho) \rightarrow  (B, \sigma)$ is called a $\mathcal Z$-\emph{trivial morphism} if it factors through a \emph{trivial object}. In the following we shall often use the terms ``trivial morphism'' and ``trivial object'' (dropping the ``$\mathcal Z$'' of ``$\mathcal Z$-trivial'').

Given a morphism $f \colon A \rightarrow B$, where $(B, \sigma)$ is an object in $\mathsf{PreOrd}(\mathbb C)$, we denote by $f^{-1} (\sigma)$ the \emph{inverse image}
of $\sigma$ along $f$, that is the left vertical relation defined by the following pullback:
$$
\xymatrix{f^{-1} (\sigma) \ar[r] \ar[d] & \sigma \ar[d]^{\langle s_1, s_2\rangle} \\
A \times A   \ar[r]_{f \times f}& B \times B
}
$$
Recall then that, in any category with an initial object $0$, a subobject $\alpha \colon A \rightarrow B$  of an object $B$ is \emph{complemented} if there is another subobject $\alpha^c \colon A^c \rightarrow B$ with the property $A \cap A^c = 0$ and $A \cup A^c = B$.

It was observed in \cite{BCG} (Corollary $5.4$) that a subobject $\xymatrix{{(A, \rho)\, \, } \ar@{>->}[r]^\alpha &  (B, \sigma)}$ in $\mathsf{PreOrd}(\mathbb C)$
 is \emph{complemented} in $\mathsf{PreOrd}(\mathbb C)$ if and only if
 \begin{enumerate}
\item $\xymatrix{ {A\,\, } \ar@{>->}[r]^-{\alpha} & B}$ is a complemented subobject in $\mathbb C$, with complement $\xymatrix{ {{A}^c \,\, } \ar@{>->}[r]^-{\alpha^c} & B}$;
\item $\alpha^{-1} (\sigma) = \rho$, ${(\alpha^c)}^{-1} (\sigma) = {\rho}^c$ and 
all the commutative squares in the diagram
$$
\xymatrix@=40pt{
\rho \ar@{ >->}[r]^-{} \ar@<.5ex>[d]^{r_2} \ar@<-.5ex>[d]_{r_1} & \sigma  \ar@<.5ex>[d]^{s_2} \ar@<-.5ex>[d]_{s_1} & \rho^c \ar@{ >->}[l] \ar@<.5ex>[d]^{r_2^c} \ar@<-.5ex>[d]_{r_1^c}\\
A \ar@{ >->}[r]^-{\alpha} & B = A\coprod A^c & A^c \ar@{ >->}[l]_-{\alpha^c} \\
}
$$
\noindent (i.e. the ones corresponding to the same index $i \in \{1,2 \}$) are pullbacks.
\end{enumerate}
Note that, in a pretopos $\mathbb C$, this implies that $\sigma = \rho \coprod \rho^c$. 

\subsection*{Partial maps}
Before recalling the definition of the stable category of $\mathsf{PreOrd}(\mathbb C)$,
 as an intermediate step, we first define the category $\mathsf{PaPreOrd}(\mathbb C)$ of \emph{partial morphisms} in $\mathsf{PreOrd}(\mathbb C)$.
 
 Its objects are the same as the ones of $\mathsf{PreOrd}(\mathbb C)$, the internal preorders $(A, \rho)$ in $\mathbb C$, while a morphism $(A,\rho) \rightarrow (B,\sigma)$ in the category $\mathsf{PaPreOrd}(\mathbb C)$ is a pair $(\alpha, f)$ depicted as 
$$
\xymatrix{& (A',\rho') \ar@{ >->}[dl]_{\alpha} \ar[dr]^{f} & \\ (A, \rho) \ar@{.>}[rr]_{(\alpha,f)}& & (B,\sigma), }
$$
where $(A', \rho')$ is an internal preorder, $f$ is a morphism in $\mathsf{PreOrd}(\mathbb C)$, and $\alpha \colon (A',\rho') \rightarrow (A,\rho)$ is a complemented subobject in $\mathsf{PreOrd}(\mathbb C)$.
Given two composable morphisms $(\alpha, f) \colon (A,\rho) \rightarrow (B,\sigma)$ and $(\beta, g) \colon (B,\sigma) \rightarrow (C,\tau)$ in $\mathsf{PaPreOrd}(\mathbb C)$, the composite morphism $(\beta, g) \circ (\alpha, f)$ in $\mathsf{PaPreOrd}(\mathbb C)$ is defined by the external part of the following diagram
$$ \xymatrix@=15pt{
& & (A'', \rho'')  \ar[rd]^{f'} \ar@{ >->}[dl]_{\alpha'} & & \\
& (A',\rho') \ar@{ >->}[dl]_{\alpha} \ar[dr]^{f}& & (B',\sigma') \ar@{ >->}[dl]_{\beta} \ar[dr]^{g}& \\
(A,\rho) \ar@{.>}[rr]_{(\alpha,f)}& & (B,\sigma) \ar@{.>}[rr]_{(\beta,g)} & &  (C,\tau) }
$$
where the upper part is a pullback. In other words,
$$
(\beta, g) \circ (\alpha, f)= (\alpha \alpha', g f').
$$

The well-known properties of pullbacks guarantee that this composition is associative. For any preorder $(A,\rho)$,
the identity on it in $\mathsf{PaPreOrd}(\mathbb C)$ is the arrow
$$\xymatrix{& (A,\rho) \ar@{=}[dl]_{1} \ar@{=}[dr]^{1} & \\ (A, \rho) \ar@{.>}[rr]_1& & (A,\rho) }
 $$
 
\begin{remark}
 Notice that given a partial map $(\alpha, f)\colon (A, \rho)\to (B, \sigma)$, the subobject $\alpha \colon (A',\rho') \rightarrow (A,\rho)$ can only be determined up to isomorphism, being the representative of a class of monomorphisms $(A', \rho')\to (A, \rho)$ in $\PreOrd C$. Nevertheless, it is easy to prove that the composition is independent of the choice of representatives. See \cite{RR} and the references therein for more details about partial maps.
\end{remark}

 As explained in \cite{BCG}, there is a functor $I \colon \mathsf{PreOrd}(\mathbb C) \rightarrow \mathsf{PaPreOrd}(\mathbb C)$ which is the identity on objects and such that, for any $f \colon (A,\rho) \rightarrow (B,\sigma)$ in $\mathsf{PreOrd}(\mathbb C)$, its value $I(f) \colon (A,\rho) \rightarrow (B,\sigma)$ in $\mathsf{PaPreOrd}(\mathbb C)$ is given by the morphism
 $$
 \xymatrix{& (A,\rho) \ar@{=}[dl]_{1} \ar[dr]^{f} & \\ (A, \rho) \ar@{.>}[rr]_{I(f)}& & (B,\sigma).}$$
 
To simplify the notation, from now on, we shall write $A$ instead of $(A,\rho)$ to denote an internal preorder and $\xymatrix{A \ar[r] & B }$ for a morphism of preorders. 
 The fact that the initial object $0$ of $\PreOrd C$ is strict implies that $0$ is a zero object in $ \mathsf{PaPreOrd}(\mathbb C)$, and thus
 the category $ \mathsf{PaPreOrd}(\mathbb C)$ is equipped with an ideal $\mathcal N$ of (null) morphisms \cite{Ehr}, where $\mathcal N$ is the class of morphisms in $\mathsf{PaPreOrd}(\mathbb C)$ of the form 
 $$
 \xymatrix{& 0 \ar@{>->}[dl]_{} \ar[dr]^{} & \\ B \ar@{.>}[rr]_0 & & C. }
 $$
 
The \emph{stable category} \cite{BCG} is defined as a suitable quotient of the category $\mathsf{PaPreOrd}(\mathbb C)$. In the special case when $\mathbb C$ is the category of sets this construction reduces to the one of the stable category by Facchini and Finocchiaro in \cite{FF}. In order to define the stable category, the following notion is needed:
 \begin{definition}
 A \emph{congruence diagram} in $\mathsf{PreOrd}(\mathbb C)$ is a diagram of the form
\begin{equation}\label{C-diagram}  
\xymatrix@=35pt{
{A_0^{1}}^c \ar@{ >->}[rr]^{{\alpha_0^1}^c} & & A_1  \ar@{>->}[ld]^{\alpha_1} \ar[dr]^{f_1}  & & \\
 A_0 \ar@{ >->}[drr]_{\alpha_0^2} \ar@{ >->}[rru]^{\alpha_0^1 } \ar@{ >->}[r]_{\alpha_0} & A & & B \\
 {A_0^{2}}^c \ar@{ >->}[rr]_{{\alpha_0^2}^c}  & & A_2 \ar@{>->}[ul]_{\alpha_2} \ar[ru]_{f_2} & &
 }
\end{equation}
where:
 \begin{itemize}
 \item any arrow of the form $\xymatrix{ \ar@{>->}[r] &  }$ represents a complemented subobject in $\mathsf{PreOrd}(\mathbb C)$;
 \item the two triangles commute;
 \item $\xymatrix{{A_0^{i}}^c  \ar@{>->}[r]^{{\alpha_0^i}^c} & A_i }$ is the complement in $A_i$ of the subobject $\xymatrix{A_0 \ar@{>->}[r]^{{\alpha_0^i}} & A_i }$;
 \item  $f_1  \alpha_0^1 = f_2 \alpha_0^2$;
 \item each $f_i {\alpha_0^i}^c $ is a trivial morphism.
 \end{itemize}
 \end{definition}
 Two parallel morphisms $(\alpha_1, f_1)$ and $(\alpha_2, f_2)$ in $\mathsf{PaPreOrd}(\mathbb C)$, depicted as
$$
\xymatrix{
\ar@{}[drrrrrrr]|{\mbox{and}} & A_1 \ar@{>->}[dl]_{\alpha_1} \ar[dr]^{f_1} & & & & & A_2 \ar@{>->}[dl]_{\alpha_2} \ar[dr]^{f_2} & \\
 A \ar@{.>}[rr]_{} & & B & & & A \ar@{.>}[rr]_{}& & B,
 }
$$
 are \emph{equivalent} if there is a congruence diagram of the form \eqref{C-diagram} between them. In this case one writes $(\alpha_1, f_1) \sim (\alpha_2, f_2)$. As shown in \cite{BCG}, the relation $\sim$ is an equivalence relation which is also compatible with the composition in $\mathsf{PaPreOrd}(\mathbb C)$, and is then a \emph{congruence} (in the sense of \cite{MacLane}) on the category $\mathsf{PaPreOrd}(\mathbb C)$.

 \begin{definition}\cite{BCG}
The quotient category $\mathsf{Stab}(\mathbb C)$ of $\mathsf{PaPreOrd}(\mathbb C)$ by the congruence $\sim$ defined above is called the \emph{stable category}. If $\pi \colon \mathsf{PaPreOrd}(\mathbb C) \rightarrow \mathsf{Stab}(\mathbb C)$ is the quotient functor, we also have a functor $$\Sigma = \pi \circ I \colon \mathsf{PreOrd}(\mathbb C) \longrightarrow \mathsf{Stab}(\mathbb C)$$ obtained by precomposing $\pi$ with the functor $I \colon \mathsf{PreOrd}(\mathbb C) \rightarrow \mathsf{PaPreOrd}(\mathbb C)$.
 \end{definition}

\begin{remark}
The definition of the stable category $\mathsf{Stab}(\mathbb C)$ of $\mathsf{PreOrd}(\mathbb C)$ actually depends on the class $\Cal Z$ of trivial objects. Thus we should write ``the stable category of $\mathsf{PreOrd}(\mathbb C)$ with respect to $\Cal Z$" and call it the ``$\Cal Z$-stable category of $\mathsf{PreOrd}(\mathbb C)$". Nevertheless, we prefer to follow the notation adopted in \cite{FF} and refert to $\mathsf{Stab}(\mathbb C)$ as the stable category associated with $\mathsf{PreOrd}(\mathbb C)$.
It is also worth noting that the construction we provide is based on the properties of the class $\Cal Z$ (such as the fact that it contains both the initial and the terminal objects or that it is closed under coproducts) that may not hold for any pretorsion theory in $\mathsf{PreOrd}(\mathbb C)$. Thus, a priori, it is not possible to construct the stable category for any pretorsion theory in $\mathsf{PreOrd}(\mathbb C)$.
\end{remark}
 
The stable category is pointed, and the zero object $0$ of $\mathsf{Stab}(\mathbb C)$ is the image by the functor $\Sigma$ of the initial object in $\mathsf{PreOrd}(\mathbb C)$.
As shown in \cite{BCG}, an object $A$ in $\mathsf{PreOrd}(\mathbb C)$ is such that $\Sigma (A) = 0$ if and only if $A$ is a ``discrete'' object, that is an object $A$ equipped with the preorder given by the discrete equivalence relation $\Delta_A$ on $A$. Moreover, $f \colon A \rightarrow B$ in $\mathsf{PreOrd}(\mathbb C)$ is a trivial morphism if and only if $\Sigma(f) = 0$ in $\mathsf{Stab}(\mathbb C)$.
More generally, one has the following result, where we write $<\alpha, f>$ for the image of the morphism 
$$
\xymatrix{& (A',\rho') \ar@{ >->}[dl]_{\alpha} \ar[dr]^{f} & \\ (A, \rho) & & (B,\sigma), }
$$
by the functor $\pi$:

\begin{Lemma}\label{zero-morphisms}
For a morphism $\xymatrix@=30pt{A  \ar[r]^{< \alpha, f>} &  B }$ in $\mathsf{Stab}(\mathbb C)$ the following conditions are equivalent:
\begin{enumerate}
 \item $< \alpha, f> = 0$;
 \item $f$ is a trivial morphism in $\mathsf{PreOrd}(\mathbb C)$.
\end{enumerate}
\end{Lemma}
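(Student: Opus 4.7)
My plan is to unfold the definition of the congruence $\sim$ between parallel morphisms in $\mathsf{PaPreOrd}(\mathbb C)$, comparing the given partial morphism $(\alpha,f)$ with the canonical representative of the zero morphism, which is the span through the strict initial object $0$:
$$
\xymatrix{& 0 \ar@{ >->}[dl] \ar[dr] & \\ A & & B.}
$$
Both implications then amount to building, respectively exploiting, a congruence diagram of the form \eqref{C-diagram} in which $A_2 = 0$.

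For $(2)\Rightarrow(1)$, assume $f$ is trivial and construct an explicit congruence diagram between $(\alpha,f)$ and the zero span. I set $A_1 = A'$ (with $\alpha_1 = \alpha$ and $f_1 = f$), $A_2 = 0$ (with $\alpha_2$ and $f_2$ the unique morphisms out of the initial object), and $A_0 = 0$ with $\alpha_0$ the initial arrow into $A$. The inclusions $\alpha_0^1 \colon 0 \to A'$ and $\alpha_0^2 \colon 0 \to 0$ are complemented (in a pretopos $0$ is always a complemented subobject, with complement the total object), so $(\alpha_0^1)^c$ is the identity on $A'$ and $(\alpha_0^2)^c$ is the initial arrow into $0$. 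The triangle commutativity conditions are automatic since they are morphisms out of $0$, and $f_1 \alpha_0^1 = f_2 \alpha_0^2$ trivially. Finally, the two triviality requirements $f_1(\alpha_0^1)^c = f$ trivial (by hypothesis) and $f_2 (\alpha_0^2)^c = 0$ trivial (it is a zero morphism) hold. Hence $(\alpha,f) \sim (0,0)$ and so $\langle\alpha,f\rangle = 0$ in $\mathsf{Stab}(\mathbb C)$.

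For $(1)\Rightarrow(2)$, assume there is a congruence diagram of the shape \eqref{C-diagram} between $(\alpha,f)$ (on the top row, with $A_1 = A'$) and the zero span (on the bottom row, with $A_2 = 0$). The key observation is that the leg $\alpha_0^2\colon A_0 \to A_2 = 0$ is a morphism into the strict initial object of $\mathsf{PreOrd}(\mathbb C)$, hence $A_0 \cong 0$. Consequently the subobject $\alpha_0^1 \colon A_0 \to A_1 = A'$ is the initial arrow $0 \to A'$, and its complement $(\alpha_0^1)^c$ is the identity on $A'$. The condition of the congruence diagram then forces $f_1(\alpha_0^1)^c = f \cdot \mathrm{id}_{A'} = f$ to be a trivial morphism, which is exactly (2).

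The main (and essentially only) subtle point is the use of the strictness of the initial object of $\mathsf{PreOrd}(\mathbb C)$ to collapse $A_0$ to $0$ in the second direction; once this is in place, both implications follow directly from reading off the defining conditions of a congruence diagram, without any further calculation.
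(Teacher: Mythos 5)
Your proposal is correct and follows essentially the same route as the paper: both directions are handled by constructing (for $(2)\Rightarrow(1)$) or analysing (for $(1)\Rightarrow(2)$) a congruence diagram against the zero span through $0$, with the strictness of the initial object forcing $A_0=0$ and hence $({\alpha_0^1})^c=1_{A'}$ in the forward direction. No gaps.
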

 
\begin{proof}
The assumption $< \alpha, f> = 0$ implies that there is a congruence diagram of the form 
$$\xymatrix@=35pt{
{A_0^{1}}^c \ar@{ >->}[rr]^{{\alpha_0^1}^c} & & A'  \ar@{>->}[ld]^{\alpha} \ar[dr]^{f}  & & \\
A_0 \ar@{ >->}[drr]_{\alpha_0^2} \ar@{ >->}[rru]^{\alpha_0^1 } \ar@{ >->}[r]_{\alpha_0} & A   & &B \\
{A_0^{2}}^c \ar@{ >->}[rr]_{{\alpha_0^2}^c}  & & 0 \ar@{ >->}[ul] \ar[ru] & &
}$$
hence $ A_0 = 0= {A_0^{2}}^c$ (since $0$ is a strict initial object). This implies that $ {A_0^{1}}^c = A'$, ${\alpha_0^1}^c = 1_{A'}$ and $f$ is trivial on $A'$.
 
 Conversely, when $f$ is a trivial morphism, it suffices to build the following congruence diagram
$$\xymatrix@=35pt{
A_1 \ar@{=}[rr] & & A_1  \ar@{>->}[ld]^{\alpha} \ar[dr]^{f}  & & \\
0 \ar@{=}[drr] \ar@{>->}[rru]^{ } \ar@{>->}[r] & A   & &B \\
0 \ar@{=}[rr]  & & 0 \ar@{>->}[ul] \ar[ru] & &
 }
$$
showing that $< \alpha, f> = 0$.
\end{proof}

 Note also that the ``intuition'' here should be that a diagram
 \begin{equation}\label{morphism-f}
 \xymatrix{& A' \ar@{>->}[dl]_{\alpha} \ar[dr]^{f} & \\ A & & B}
 \end{equation}
``represents'' a morphism $ < \alpha, f>$ whose restriction on the (complemented) subobject $(A',\rho')$ of $(A,\rho)$ is $f$, and that is ``trivial'' on the complement of $(A',\rho')$ in $(A,\rho)$, as explained in the following:

  \begin{Prop}\label{justification}
If $\xymatrix{A \ar[r]^{< \alpha, f>} & B}$ is a morphism in $\mathsf{Stab}(\mathbb C)$, then the following diagram is commutative in $\mathsf{Stab}(\mathbb C)$, where $\xymatrix{{{A'}^c \,\, }\ar@{>->}[r]^{\alpha^c} & A}$ is the complement of $A'$ in $A$ in $\mathsf{PreOrd}(\mathbb C)$:
\begin{equation}\label{commutative-just}
\xymatrix@=30pt{\Sigma(A') \ar[rrd]^{\Sigma(f)} \ar[rd]_{\Sigma(\alpha)}& & \\
& \Sigma(A) \ar[r]^(.4){< \alpha, f>} & \Sigma(B) \\
\Sigma({A'}^c) \ar[rru]_0 \ar[ru]^{\Sigma({\alpha^c)}} & & 
}
\end{equation}
\end{Prop}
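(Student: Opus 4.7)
The strategy is to compute the two composites in $\mathsf{Stab}(\mathbb{C})$ directly from the definition of composition in $\mathsf{PaPreOrd}(\mathbb{C})$, and then observe that the quotient functor $\pi$ sends them to the desired arrows. Since $\Sigma=\pi\circ I$, we have $\Sigma(\alpha)=\langle 1_{A'},\alpha\rangle$, $\Sigma(\alpha^c)=\langle 1_{{A'}^c},\alpha^c\rangle$, and $\Sigma(f)=\langle 1_{A'},f\rangle$. Thus both triangles reduce to computing composites of representing partial maps.

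For the upper triangle, I would form $\langle\alpha,f\rangle\circ\langle 1_{A'},\alpha\rangle$ by taking the pullback of the right leg $\alpha\colon A'\to A$ of the first partial map along the left leg $\alpha\colon A'\to A$ of the second. Since $\alpha$ is a monomorphism, this pullback is $A'$ itself with both projections equal to $1_{A'}$. The composite partial map is therefore
$$
\xymatrix{ & A' \ar@{=}[dl] \ar[dr]^{f} & \\ A' & & B, }
$$
which is exactly $I(f)$, so its image under $\pi$ is $\Sigma(f)$.

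For the lower triangle, the same recipe applied to $\langle\alpha,f\rangle\circ\langle 1_{{A'}^c},\alpha^c\rangle$ requires pulling back the mono $\alpha$ along the mono $\alpha^c$. By definition of the complement in $\mathsf{PreOrd}(\mathbb{C})$, this pullback is the intersection $A'\cap{A'}^c$, which is the strict initial object $0$. The composite is then the partial map whose middle object is $0$, and this is precisely the null morphism of the ideal $\mathcal{N}$ in $\mathsf{PaPreOrd}(\mathbb{C})$; hence its image in $\mathsf{Stab}(\mathbb{C})$ is the zero morphism. (Alternatively, one can invoke Lemma~\ref{zero-morphisms}, since the right leg $0\to B$ factors through a trivial object.)

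There is no real obstacle here: the argument is a routine unwinding of the composition law in $\mathsf{PaPreOrd}(\mathbb{C})$, relying only on the fact that the pullback of a mono along itself is the identity and that complementary subobjects intersect in the strict initial object $0$. The only minor subtlety to check is that the choice of representative $(\alpha,f)$ does not affect the conclusion, which is guaranteed by the remark on independence of representatives recorded after the definition of composition in $\mathsf{PaPreOrd}(\mathbb{C})$.
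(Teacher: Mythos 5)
Your proposal is correct and follows essentially the same route as the paper: the authors also compute $\langle\alpha,f\rangle\circ\Sigma(\alpha)$ via the pullback of the mono $\alpha$ along itself (yielding $I(f)$) and $\langle\alpha,f\rangle\circ\Sigma(\alpha^c)$ via the pullback $A'\cap{A'}^c=0$, concluding with Lemma~\ref{zero-morphisms}. Nothing is missing.
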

 \begin{proof}
 In order to see that $< \alpha, f>  \Sigma(\alpha) = \Sigma(f)$ it
 suffices to consider the diagram 
 $$\xymatrix@=25pt{  & & A'  \ar@{=}[ld]^{} \ar@{=}[dr]^{}  & & & \\
&  A' \ar@{=}[dl]^{}  \ar@{>->}[dr]^{\alpha} &  & A' \ar@{>->}[dl]_{\alpha} \ar[dr]^f & & \\
 A'   \ar@{.>}[rr]^{\Sigma(\alpha)} & & A \ar@{.>}[rr]^{<\alpha, f>}  && B
 } $$
 where the upper quadrangle is a pullback.
 On the other hand, the assumption that $A' \cap {A'}^c= 0$ implies that $< \alpha, f>  \Sigma({\alpha^c})=0$
 $$\xymatrix@=25pt{  & & 0  \ar[ld]^{} \ar[dr]^{}  & & & \\
&  {A'}^c \ar@{=}[dl]^{}  \ar@{>->}[dr]^{\alpha^c} &  & A' \ar@{ >->}[dl]_{\alpha} \ar[dr]^f & & \\
 {A'}^c   \ar@{.>}[rr]^{\Sigma(\alpha^c)} & & A \ar@{.>}[rr]^{<\alpha, f>}  && B
 } $$
 since the composite $0 \rightarrow A' \rightarrow B$ is obviously trivial. One concludes by Lemma \ref{zero-morphisms}.
 \end{proof}
 
 The following result (Lemma 7.11 in \cite{BCG}) will also be useful:
 \begin{lemma}\label{existence-cokernel}\cite{BCG}
Let us consider a morphism $< \alpha, f>$ in $\mathsf{Stab}(\mathbb C)$ represented by 
\begin{equation}\label{morphism-stab}
\xymatrix{& {\, \, \, \, A'} \ar@{>->}[dl]_{\alpha} \ar[dr]^{f} & \\ A & & B}
\end{equation}
and assume that for any complemented subobject $\xymatrix{{B' \, \, }\ar@{>->}[r] & B}$ the induced morphism $f^{-1} (B') \rightarrow B'$ has a $\mathcal Z$-cokernel in $\mathsf{PreOrd}(\mathbb C)$. Then the cokernel of $<\alpha, f>$ exists in $\mathsf{Stab}(\mathbb C)$, and 
$$\mathsf{coker} (<\alpha, f>) = \Sigma ({\mathcal Z}\mbox{-}\mathsf{coker}(f)). $$
\end{lemma}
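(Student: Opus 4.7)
The plan is to show that $\Sigma(q)\colon B\to Q$, where $q\colon B\to Q$ denotes the $\mathcal Z$-cokernel of $f$ in $\mathsf{PreOrd}(\mathbb C)$ (which exists by applying the hypothesis to the complemented subobject $B\hookrightarrow B$), satisfies the universal property of the cokernel of $< \alpha,f>$ in $\mathsf{Stab}(\mathbb C)$. The vanishing condition $\Sigma(q)\circ < \alpha,f > = 0$ is straightforward: since $\Sigma(q)=<1_B, q>$, the pullback used in the composition law of $\mathsf{PaPreOrd}(\mathbb C)$ degenerates and the composite is represented by the span $A\xleftarrow{\alpha} A'\xrightarrow{qf} Q$; as $qf$ is $\mathcal Z$-trivial by definition of $q$, Lemma \ref{zero-morphisms} gives $\Sigma(q)\circ < \alpha,f > = 0$.

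Next I address the universal property. Let $< \beta,g>\colon B\to C$ be any morphism in $\mathsf{Stab}(\mathbb C)$, represented by $B\xleftarrow{\beta} B'\xrightarrow{g} C$, with $< \beta,g>\circ < \alpha,f> = 0$. Computing this composite through the pullback that defines $A'':=f^{-1}(B')$, with induced projections $\alpha''\colon A''\to A'$ and $f''\colon A''\to B'$, yields the representative $A\xleftarrow{\alpha\alpha''} A''\xrightarrow{gf''} C$, so Lemma \ref{zero-morphisms} forces $gf''$ to be $\mathcal Z$-trivial. The hypothesis, applied to $B'\hookrightarrow B$, produces a $\mathcal Z$-cokernel $q'\colon B'\to Q'$ of $f''$, and triviality of $gf''$ yields a unique $g'\colon Q'\to C$ with $g'q'=g$. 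Since $q\beta f''=qf\alpha''$ is also $\mathcal Z$-trivial, universality of $q'$ produces a unique $\phi\colon Q'\to Q$ with $\phi q'=q\beta$.

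The sought-after factorization is $h:=<\phi,g'>\colon Q\to C$ in $\mathsf{Stab}(\mathbb C)$, provided that $\phi$ is a complemented subobject of $Q$ in $\mathsf{PreOrd}(\mathbb C)$. This complementedness is the crux of the argument and the main technical obstacle: I expect it to follow from the decomposition $B=B'\coprod {B'}^c$ coming from the complementedness of $\beta$, combined with extensivity of the pretopos $\mathbb C$ and the characterization of complemented subobjects of $\mathsf{PreOrd}(\mathbb C)$ recalled in Section~1, which together with the compatibility of the $\mathcal Z$-cokernel construction with disjoint decompositions should yield $Q=Q'\coprod {Q'}^c$, where ${Q'}^c$ is the $\mathcal Z$-cokernel of the restriction $f^{-1}({B'}^c)\to {B'}^c$, and exhibit $\phi$ as the coprojection. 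Once this is established, a direct composition calculation through the pullback $q^{-1}(Q')=B'$, combined with the identity $g'q'=g$, shows $h\circ\Sigma(q)=< \beta,g>$; uniqueness of $h$ then follows from the uniqueness of $\phi$ and $g'$ in the universal properties invoked above, together with Proposition \ref{justification} to pin down any other candidate factorization as null on the complement ${Q'}^c$.
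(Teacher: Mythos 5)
First, a point of comparison: the paper does not actually prove this statement here --- it is quoted as Lemma 7.11 of \cite{BCG} --- so I can only judge whether your argument would stand on its own. Your overall strategy is the right one, and the existence half of the universal property is essentially correct: the computation $\Sigma(q)\circ\langle\alpha,f\rangle=\langle\alpha,qf\rangle=0$ is fine, as is the construction of the candidate factorization $\langle\phi,g'\rangle$ from the $\mathcal Z$-cokernel $q'$ of $f''\colon f^{-1}(B')\to B'$. The complementedness of $\phi$, which you flag as the crux but do not prove, does go through as you expect: writing $A'=f^{-1}(B')\coprod f^{-1}({B'}^c)$ and $B=B'\coprod {B'}^c$, the coproduct of the two $\mathcal Z$-cokernels $q'\colon B'\to Q'$ and $q'''\colon {B'}^c\to Q'''$ is a $\mathcal Z$-cokernel of $f$ (a coproduct of trivial morphisms is trivial because $\Delta_X\coprod\Delta_Y=\Delta_{X\coprod Y}$, and the universal property is checked componentwise using extensivity), so $Q=Q'\coprod Q'''$ and the uniqueness clause in the universal property of $q'$ forces $\phi$ to be the coprojection, hence complemented. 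This is one of the two places where the hypothesis ``for every complemented $B'\rightarrowtail B$'' is genuinely used, so the step should be written out rather than left as an expectation.

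The genuine gap is the uniqueness clause of the cokernel's universal property. Your closing sentence --- uniqueness of $\phi$ and $g'$ together with Proposition \ref{justification} --- only shows that your particular construction is canonical; it does not show that an arbitrary $h'=\langle\psi,k\rangle\colon Q\to C$ satisfying $h'\circ\Sigma(q)=\langle\beta,g\rangle$ must coincide with $\langle\phi,g'\rangle$ in $\mathsf{Stab}(\mathbb C)$. Such an $h'$ is defined on an arbitrary complemented subobject $\widetilde Q\rightarrowtail Q$, a priori unrelated to $Q'$, and the hypothesis only gives you a congruence diagram over $B$ relating $\langle q^{-1}(\widetilde Q)\hookrightarrow B,\; k\circ q|\rangle$ to $\langle\beta,g\rangle$. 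To transport that congruence back to one over $Q$ you need to know that $q$ is suitably surjective --- for instance that $\Sigma(q)$ is an epimorphism in $\mathsf{Stab}(\mathbb C)$, or that $q$ is a regular epimorphism along which complemented subobjects of $Q$ and morphisms out of $Q$ are detected by pullback. This is exactly where the concrete description of $\mathcal Z$-cokernels in $\mathsf{PreOrd}(\mathbb C)$ as quotients (cf.\ Proposition 7.3 of \cite{BCG}) has to enter, and nothing in your argument supplies it. As it stands you have shown that a factorization through $\Sigma(q)$ exists, but not that it is unique, so the universal property of the cokernel is not established.
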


 \section{ The universal property of $\mathsf{Stab}(\mathbb C)$}
 \begin{definition}\label{tt-functor}
 Let $({\mathbb A}, {\mathcal T},  {\mathcal F})$ be a category $\mathbb A$ with a given pretorsion theory $({\mathcal T},  {\mathcal F})$ in $\mathbb A$. If $({\mathbb B}, {\mathcal T'},  {\mathcal F'})$ is a pointed category $\mathbb B$ with a given torsion theory $({\mathcal T'},  {\mathcal F'})$ in it, we say that a functor $G \colon \mathbb A \rightarrow \mathbb B$ is a \emph{torsion theory functor} if the following two properties are satisfied:
 \begin{enumerate}
 \item $G(A) \in {\mathcal T}'$ for any $A \in \mathcal T$, $G(B) \in {\mathcal F}'$ for any $B \in \mathcal F$;
 \item if $T(A) \rightarrow A \rightarrow F(A)$ is the canonical short $\mathcal Z$-exact sequence associated with $A$ in the pretorsion theory $({\mathcal T},  {\mathcal F})$, then $$0 \rightarrow G(T(A)) \rightarrow G(A) \rightarrow G(F(A)) \rightarrow 0$$ is a short exact sequence in $\mathbb B$.
 \end{enumerate}
 \end{definition}
 When $\mathbb C$ is a pretopos, we write $(\mathsf{Eq}(\mathbb C), \mathsf{ParOrd}(\mathbb C))$ for the pretorsion theory in $\mathsf{PreOrd}(\mathbb C)$ where $\mathsf{Eq}(\mathbb C)$ is the category of equivalence relations and $\mathsf{ParOrd}(\mathbb C)$ the category of partial orders in $\mathbb C$.
 \begin{Prop}\label{Sigma-is-tt}
 The functor $\Sigma \colon \mathsf{PreOrd}(\mathbb C) \rightarrow \mathsf{Stab}(\mathbb C)$ is a torsion theory functor that preserves finite coproducts and monomorphisms.
 \end{Prop}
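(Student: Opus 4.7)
The first task is to identify the torsion theory $(\mathcal T', \mathcal F')$ on $\mathsf{Stab}(\mathbb C)$ with respect to which $\Sigma$ is to be a torsion theory functor. I take $\mathcal T'$ (respectively $\mathcal F'$) to be the replete full subcategory of $\mathsf{Stab}(\mathbb C)$ whose objects are isomorphic to some $\Sigma(E)$ with $E \in \mathsf{Eq}(\mathbb C)$ (respectively $\Sigma(F)$ with $F \in \mathsf{ParOrd}(\mathbb C)$). To verify the orthogonality axiom, consider a morphism $< \alpha, f >\colon \Sigma(E) \to \Sigma(F)$ with $E \in \mathsf{Eq}(\mathbb C)$ and $F \in \mathsf{ParOrd}(\mathbb C)$, represented by a complemented subobject $\alpha \colon E' \rightarrowtail E$ together with a morphism $f \colon E' \to F$. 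By the characterization of complemented subobjects recalled from Corollary $5.4$ of \cite{BCG}, the preorder on $E'$ is $\alpha^{-1}(\rho_E)$, which inherits symmetry from $\rho_E$; hence $E' \in \mathsf{Eq}(\mathbb C)$. Therefore $f$ is a morphism from an equivalence relation to a partial order, so it is trivial in $\mathsf{PreOrd}(\mathbb C)$, and Lemma \ref{zero-morphisms} yields $< \alpha, f > = 0$. For the decomposition axiom, the canonical short $\mathcal Z$-exact sequence $T(A) \to A \to F(A)$ supplied by the pretorsion theory in $\mathsf{PreOrd}(\mathbb C)$ is sent by $\Sigma$ to a short exact sequence $0 \to \Sigma(T(A)) \to \Sigma(A) \to \Sigma(F(A)) \to 0$ in $\mathsf{Stab}(\mathbb C)$ by Theorem $7.14$ of \cite{BCG}, and every object of $\mathsf{Stab}(\mathbb C)$ is of the form $\Sigma(A)$.

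Granted that $(\mathcal T', \mathcal F')$ is a torsion theory, $\Sigma$ is a torsion theory functor in the sense of Definition \ref{tt-functor}: condition $(1)$ holds by construction of $\mathcal T'$ and $\mathcal F'$, and condition $(2)$ is exactly the short exact sequence above, itself an instance of Theorem $7.14$ of \cite{BCG}. Preservation of finite coproducts is likewise part of that theorem.

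It remains to show that $\Sigma$ preserves monomorphisms. The forgetful functor $U \colon \mathsf{PreOrd}(\mathbb C) \to \mathbb C$ admits the discrete preorder functor as a left adjoint, so it preserves monomorphisms, and thus any monomorphism $f \colon A \to B$ in $\mathsf{PreOrd}(\mathbb C)$ is also a monomorphism in $\mathbb C$. Assume $\Sigma(f) \circ < \alpha_1, g_1 > = \Sigma(f) \circ < \alpha_2, g_2 >$ in $\mathsf{Stab}(\mathbb C)$. By the composition rule in $\mathsf{PaPreOrd}(\mathbb C)$, both sides are represented by $< \alpha_i, f g_i >$, so there is a congruence diagram between $< \alpha_1, f g_1 >$ and $< \alpha_2, f g_2 >$. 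I then transfer this diagram through $f$: since $f$ is monic in $\mathbb C$, the equality $f g_1 \alpha_0^1 = f g_2 \alpha_0^2$ yields $g_1 \alpha_0^1 = g_2 \alpha_0^2$; moreover, the triviality of each $f g_i \circ {\alpha_0^i}^c$ forces $(g_i \times g_i)$ to send the preorder on ${A_0^i}^c$ into $(f \times f)^{-1}(\Delta_B) = \Delta_A$, so that $g_i \circ {\alpha_0^i}^c$ factors through its image equipped with the discrete preorder and is itself trivial. The resulting congruence diagram witnesses $< \alpha_1, g_1 > = < \alpha_2, g_2 >$, proving that $\Sigma(f)$ is a monomorphism. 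The main delicate step is this final transfer of triviality, which crucially depends on the identity $(f \times f)^{-1}(\Delta_B) = \Delta_A$ holding in the pretopos $\mathbb C$ when $f$ is monic.
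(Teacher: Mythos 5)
Your verification of the orthogonality axiom is essentially the paper's own argument (restriction of an equivalence relation along a complemented subobject is again an equivalence relation, hence $f$ is trivial, hence $<\alpha,f>=0$ by Lemma \ref{zero-morphisms}), and your direct proof that $\Sigma$ preserves monomorphisms is correct --- the paper simply cites Proposition $6.1$ of \cite{BCG} for this, while you reprove it by pushing a congruence diagram back through the mono $f$, using that $(f\times f)^{-1}(\Delta_B)$ is the kernel pair of $f$.

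The genuine gap is in the decomposition axiom. You dispose of it by citing Theorem $7.14$ of \cite{BCG} (``$\Sigma$ sends short $\mathcal Z$-exact sequences to short exact sequences''), but that theorem requires $\mathbb C$ to be a \emph{$\tau$-pretopos}: the paper itself says so explicitly in the proof of Proposition \ref{Stab-has-kernels} (``for which the assumption that $\mathbb C$ is a $\tau$-pretopos is needed''). Proposition \ref{Sigma-is-tt} is asserted for an arbitrary pretopos, so this citation does not apply; the reason transitive closures enter is that the $\mathcal Z$-cokernel of a general morphism of preorders is built from the transitive closure of a relation. The paper circumvents this by arguing directly for the \emph{canonical} sequence $(A,\sim_\rho)\to(A,\rho)\to(A/{\sim_\rho},\pi(\rho))$: the kernel part follows from Proposition $7.1$ of \cite{BCG}, and for the cokernel part one applies Lemma \ref{existence-cokernel}, whose hypothesis is verified because the restriction of $i\colon(A,\sim_\rho)\to(A,\rho)$ to any complemented subobject $(A',\rho')$ is again the inclusion of an equivalence relation $\rho''$ on $A'$, and quotients by equivalence relations (hence these particular $\mathcal Z$-cokernels, by Proposition $7.3$ of \cite{BCG}) exist in any exact category --- no transitive closure is needed. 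To repair your proof you would need to reproduce this special-case argument (or otherwise justify exactness of the image of the canonical sequence without the $\tau$ hypothesis); the same caveat applies to your attribution of finite coproduct preservation to Theorem $7.14$ rather than to Proposition $6.2$ of \cite{BCG}.
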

 \begin{proof}
 The fact that $(\mathsf{Eq}(\mathbb C), \mathsf{ParOrd}(\mathbb C))$ is a pretorsion theory was observed in \cite{FFG2} (for any exact category $\mathbb C$), while the preservation of finite coproducts and monomorphisms by the functor $\Sigma$ was established in Proposition $6.2$ and Proposition $6.1$ in \cite{BCG},  respectively. It remains to prove that $(\mathsf{Eq}(\mathbb C), \mathsf{ParOrd}(\mathbb C))$ is a torsion theory in the pointed category $\mathsf{Stab}(\mathbb C)$. Consider any morphism $< \alpha , f > \colon (A, \rho) \rightarrow (B, \sigma)$, where $\rho$ is an equivalence relation on $A$ and $\sigma$ a partial order on $B$, depicted as
 $$\xymatrix{& (A',\rho') \ar@{>->}[dl]_{\alpha} \ar[dr]^{f'} & \\ (A, \rho) \ar@{.>}[rr]_f& & (B,\sigma) }
 $$
The fact that $\rho$ is an equivalence relation and $\alpha$ a complemented subobject in $\mathsf{PreOrd}(\mathbb C)$ implies that also $\rho' = \alpha^ {-1} (\rho)$ is an equivalence relation (on $A'$). It follows that $f\colon (A', \rho') \rightarrow (B, \sigma)$ is a trivial morphism in $\mathsf{PreOrd}(\mathbb C)$ (since $(\mathsf{Eq}(\mathbb C), \mathsf{ParOrd}(\mathbb C))$ is a pretorsion theory in $\mathsf{PreOrd}(\mathbb C))$, hence a zero morphism in $\mathsf{Stab}(\mathbb C)$. 

Next, let us prove that the canonical short $\mathcal Z$-exact sequence in $\mathsf{PreOrd}(\mathbb C)$
 $$
\xymatrix{
(A, \sim_{\rho}) \ar[r]^{i} & (A,\rho)\ar[r]^-\pi & ({A}/{\sim_{\rho}}, \pi(\rho))\\
}
$$
associated with any internal preorder $(A,\rho)$, where ${\sim_{\rho}} = \rho \cap \rho^o$ and $i$ is the canonical inclusion, becomes a short exact sequence in $\mathsf{Stab}(\mathbb C)$.

First, Proposition $7.1$ in \cite{BCG} implies that $\Sigma (i) \colon (A, \sim_{\rho}) \rightarrow (A,\rho)$ is the kernel of $\Sigma (\pi) \colon (A,\rho) \rightarrow ({A}/{\sim_{\rho}}, \pi(\rho))$. 
To see that $\Sigma (\pi)$ is the cokernel of $\Sigma(i)$ we shall use Lemma \ref{existence-cokernel}. To apply this result, observe that, for any complemented subobject $(A', \rho')$ of $(A, \rho)$, the upper horizontal morphism in the pullback
$$
\xymatrix{(A', \rho'')\ar@{>->}[r] \ar[d]^{i'} & (A, \rho \cap \rho^o) \ar[d]_{i} \\
(A', \rho')\ar@{>->}[r] & (A, \rho)
}
$$
 is again a complemented subobject. This implies that $\rho''$ is the restriction to $A'$ of the equivalence relation $\rho \cap \rho^o$ on $A$, i.e. the following square is a pullback in $\mathbb C$:
 $$
\xymatrix{\rho'' \ar[r] \ar[d] & \rho \cap \rho^o \ar[d] \\
A' \times A' \ar[r]_{} & A \times A
}
$$
 This implies that $\rho''$ is an equivalence relation, and then the $\mathcal Z$-cokernel of $i'$ exists (by Proposition $7.3$ in \cite{BCG}).  The result then follows from Lemma \ref{existence-cokernel}.

 \end{proof}

\begin{theorem}\label{universal}
Let $\mathbb C$ be a pretopos. The functor $\Sigma \colon \mathsf{PreOrd}(\mathbb C) \rightarrow \mathsf{Stab}(\mathbb C)$ has the following property: it is universal among all finite coproduct preserving \emph{torsion theory functors} $G \colon \mathsf{PreOrd}(\mathbb C) \rightarrow \mathbb X$, where $\mathbb X$ has a torsion theory $({\mathcal T}, {\mathcal F})$ and finite coproducts. This means that any finite coproduct preserving torsion theory functor $G \colon \mathsf{PreOrd}(\mathbb C) \rightarrow \mathbb X$ factors uniquely through $\Sigma$:
$$ \xymatrix{\mathsf{PreOrd}(\mathbb C)  \ar[rr]^{\Sigma} \ar[rd]_{\forall G} & & \mathsf{Stab}(\mathbb C) \ar@{.>}[ld]^{\exists ! \overline{G}} \\
& \mathbb X. &
}
$$
Moreover, the induced functor $\overline{G}$ preserves finite coproducts, and is a torsion theory functor.
\end{theorem}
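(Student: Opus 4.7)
The plan is to define $\overline{G}$ directly, exploiting the description of stable-category morphisms via complemented subobjects together with the coproduct preservation of $G$. On objects, since $\Sigma$ is the identity on objects, we have no choice but to set $\overline{G}(A) := G(A)$. On morphisms, given $\langle \alpha, f\rangle \colon A \to B$ represented by $\alpha \colon A' \rightarrowtail A$ (complemented in $\mathsf{PreOrd}(\mathbb C)$, with complement $\alpha^c \colon {A'}^c \rightarrowtail A$) and $f \colon A' \to B$, I observe that $A \cong A' \coprod {A'}^c$ in $\mathsf{PreOrd}(\mathbb C)$; coproduct preservation of $G$ then gives $G(A) \cong G(A') \coprod G({A'}^c)$, and I set
$$\overline{G}(\langle\alpha, f\rangle) \; := \; [\,G(f),\, 0\,] \colon G(A') \coprod G({A'}^c) \longrightarrow G(B),$$
precomposed with the canonical isomorphism $G(A) \cong G(A') \coprod G({A'}^c)$. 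That this is the only possible definition (hence forces uniqueness of $\overline{G}$) comes from Proposition \ref{justification}: the equations $\overline{G}(\langle\alpha,f\rangle) \cdot G(\alpha) = G(f)$ and $\overline{G}(\langle\alpha,f\rangle) \cdot G(\alpha^c) = 0$ determine the morphism completely, thanks precisely to the coproduct decomposition $G(A) \cong G(A') \coprod G({A'}^c)$.

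The crucial step is to show that $\overline{G}$ is well-defined on the congruence $\sim$. Given a congruence diagram \eqref{C-diagram} relating $(\alpha_1, f_1)$ and $(\alpha_2, f_2)$, I plan to exploit the nested complemented decompositions $A_i \cong A_0 \coprod {A_0^i}^c$ together with $A \cong A_i \coprod {A^i}^c$ (where ${A^i}^c$ denotes the complement of $A_i$ in $A$) to obtain, via coproduct preservation,
$$G(A) \; \cong \; G(A_0) \coprod G({A_0^i}^c) \coprod G({A^i}^c), \qquad i = 1, 2.$$
Then I would check that both morphisms $\overline{G}(\langle\alpha_i, f_i\rangle)$ agree component-wise: on $G(A_0)$ they both restrict to $G(f_1 \alpha_0^1) = G(f_2 \alpha_0^2)$ by commutativity of the two triangles; on $G({A_0^i}^c)$ they both vanish because $f_i \cdot (\alpha_0^i)^c$ is trivial in $\mathsf{PreOrd}(\mathbb C)$ and $G$, being a torsion theory functor into a pointed category with torsion theory $(\mathcal T', \mathcal F')$, sends trivial morphisms to zero (any such factors through an object of $\mathcal Z = \mathsf{Eq}(\mathbb C) \cap \mathsf{ParOrd}(\mathbb C)$, whose image lies in $\mathcal T' \cap \mathcal F' = \{0\}$); and on $G({A^i}^c)$ they both vanish by the very definition of $\overline{G}$.

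Once well-definedness is established, functoriality (preservation of identities is immediate; preservation of composition requires chasing the pullback that defines composition in $\mathsf{PaPreOrd}(\mathbb C)$ and applying coproduct preservation once more) together with the equality $\overline{G} \cdot \Sigma = G$ (which holds by construction, since $\Sigma(f)$ is represented by $(1_A, f)$ with complement $0$) yield the factorization and its uniqueness. Preservation of finite coproducts by $\overline{G}$ then follows from the fact that $\Sigma$ preserves finite coproducts (Proposition \ref{Sigma-is-tt}) together with the same property for $G$, and the torsion-theory-functor property of $\overline{G}$ is inherited from that of $G$ via the factorization $\overline{G} \cdot \Sigma = G$. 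The main obstacle will be the well-definedness verification: the congruence diagram interlocks several nested complemented decompositions that must be simultaneously transported along $G$, and it is precisely here that the full force of the torsion theory functor hypothesis, through the vanishing of $G$ on trivial morphisms, is needed.
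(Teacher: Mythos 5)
Your overall architecture coincides with the paper's: the same forced definitions on objects and on morphisms ($\overline{G}(\langle\alpha,f\rangle)=[G(f),0]$ via $G(A)\cong G(A')\coprod G({A'}^c)$), the same observation that a torsion theory functor kills trivial morphisms because $G$ sends $\mathcal Z$ into $\mathcal T'\cap\mathcal F'=\{0\}$, and the same strategy for functoriality, uniqueness, and the final statements about coproducts and the torsion theory property. The one place where your argument has a genuine gap is the well-definedness check, which is precisely the step you flag as ``the main obstacle'' without actually resolving it. You propose to compare $\overline{G}(\langle\alpha_1,f_1\rangle)$ and $\overline{G}(\langle\alpha_2,f_2\rangle)$ componentwise on $G(A)\cong G(A_0)\coprod G({A_0^i}^c)\coprod G({A^i}^c)$, but this is a \emph{different} decomposition for $i=1$ and for $i=2$, and a componentwise comparison of two morphisms out of $G(A)$ only makes sense with respect to a single decomposition. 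Concretely: on the component $G({A_0^1}^c)$ the morphism $\overline{G}(\langle\alpha_1,f_1\rangle)$ vanishes because $f_1\,{(\alpha_0^1)}^c$ is trivial, but $\overline{G}(\langle\alpha_2,f_2\rangle)$ is defined piecewise with respect to $A=A_2\coprod{A^2}^c$, and ${A_0^1}^c$ is in general contained in neither $A_2$ nor ${A^2}^c$, so its restriction to $G({A_0^1}^c)$ is not directly computable from your definition; the same problem occurs on $G({A^1}^c)$.

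The missing ingredient, which is how the paper proceeds, is to pass to a common refinement of the two decompositions: intersect $A=A_0\coprod {A_0^1}^c\coprod{A^1}^c$ with $A=A_0\coprod {A_0^2}^c\coprod{A^2}^c$ using the distributivity law for subobjects in a pretopos (Lemma $1.4.2$ in \cite{Elep}) together with the fact that coproducts in $\mathsf{PreOrd}(\mathbb C)$ are computed componentwise. After discarding the intersections that are forced to be $0$ (such as $A_0\cap{A_0^2}^c$ and $A_0\cap{A^2}^c$), one obtains
$$A=A_0\coprod\bigl({A_0^1}^c\cap{A_0^2}^c\bigr)\coprod\bigl({A_0^1}^c\cap{A^2}^c\bigr)\coprod\bigl({A^1}^c\cap{A_0^2}^c\bigr)\coprod\bigl({A^1}^c\cap{A^2}^c\bigr),$$
and on each of these five pieces both morphisms are either equal to $G(f_1\alpha_0^1)=G(f_2\alpha_0^2)$ (on $G(A_0)$) or zero (on the other four, using the triviality of $f_i\,{(\alpha_0^i)}^c$, the definition of $[G(f_i),0]$, and the preservation of coproducts by $G$). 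With this refinement inserted your argument closes and becomes essentially the paper's proof; the remaining sketched steps (functoriality via the composition pullback and a three-fold splitting of $A$, preservation of coproducts, the torsion theory functor property of $\overline{G}$) match the paper and present no further difficulty.
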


\begin{proof}
Since $\mathsf{PreOrd}(\mathbb C)$ and $\mathsf{Stab}(\mathbb C)$ have the same objects it is clear that the definition of the functor $\overline{G}$ on the objects is ``forced'' by $G$: 
$\overline{G} (A) = G(A)$, for any object $A$ in $\mathsf{Stab}(\mathbb C)$. Let then $< \alpha, f > \colon A \rightarrow B$ be a morphism in $\mathsf{Stab}(\mathbb C)$ (as in \eqref{morphism-stab}), and recall that it is then $[f,0]$, the morphism induced by the universal property of the coproduct $A' \coprod {A'}^c = A$, since the diagram \eqref{commutative-just} in Proposition \ref{justification} commutes.
Again, the condition $\overline{G} \circ \Sigma= G$ and the fact that $\overline{G}$ has to preserve binary coproducts force the definition of the functor $\overline{G}$ on morphisms:
$$
\overline{G}(< \alpha, f > ) = [G(f), 0].
$$

The above arguments already prove the uniqueness of the functor $\overline{G}$ with the above properties.
We still need to check that $\overline{G}$ is well-defined on morphisms, i.e. if $< \alpha, f > = < \overline{\alpha}, \overline{f} >$ in $\mathsf{Stab}(\mathbb C)$, then $\overline{G} (< \alpha, f >) = \overline{G} (< \overline{\alpha}, \overline{f} >)$ or, equivalently,
$$
[G(f), 0] = [G(\overline{f}), 0].
$$
Now, the assumption $< \alpha, f > =< \overline{\alpha}, \overline{f} >$ gives a congruence diagram 

\begin{equation*}  
\xymatrix@=35pt{
A_1 \ar@{>->}[rr]^{\alpha''} & & A'  \ar@{>->}[ld]^{\alpha} \ar[dr]^{f}  & & \\
A_0 \ar@{ >->}[drr]_{\overline{\alpha'}} \ar@{ >->}[rru]^{\alpha'} \ar@{ >->}[r]_{\alpha_0} & A   & &B \\
\overline{A_1} \ar@{>->}[rr]_{\overline{\alpha''}}  & & \overline{A'} \ar@{>->}[ul]_{\overline{\alpha}} \ar[ru]_{\overline{f}} & &
 }
\end{equation*}
In $\mathsf{PreOrd}(\mathbb C)$, if we write $A''$ and $\overline{A''}$ for the complements of $A'$ and $\overline{A'}$ in $A$, respectively, we have the decompositions $$A = A' \coprod A'' = A_0 \coprod A_1 \coprod A''$$ and $$A = \overline{A'} \coprod \overline{A''} = A_0 \coprod \overline{A_1} \coprod \overline{A''}.$$
Accordingly, by taking into account the distributivity law for subobjects (see Lemma $1.4.2$ in \cite{Elep} and recall that coproducts in $\PreOrd C$ are computed ``componentwise'' \cite[Proposition~5.3]{BCG}) we get the following equalities: 
\begin{eqnarray}
 A  &=&  (A_0 \coprod A_1 \coprod A'' ) \cap (A_0 \coprod \overline{A_1} \coprod \overline{A''})\nonumber \\
  & = & (A_0 \cap A_0) \coprod (A_0 \cap \overline{A_1}) \coprod (A_0 \cap \overline{A''})\coprod (A_1 \cap A_0) \coprod (A_1 \cap \overline{A_1}) \coprod (A_1 \cap \nonumber \overline{A''}) \coprod \nonumber \\& &  ({A''} \cap A_0)
  \coprod ({A''} \cap \overline{A_1}) \coprod (A'' \cap \overline{A''}). \nonumber 
\end{eqnarray}
By taking into account the equalities $A_0 \cap A_0 = A_0$ and $$A_0 \cap \overline{A_1} = A_0 \cap \overline{A''} = A_1 \cap A_0 = A'' \cap A_0 =0,$$
we see that
$$A =  A_0 \coprod (A_1 \cap \overline{A_1}) \coprod (A_1 \cap \nonumber \overline{A''}) 
  \coprod ({A''} \cap \overline{A_1}) \coprod (A'' \cap \overline{A''}).$$
  We then observe that:
  \begin{itemize}
  \item $f=\overline{f}$ on $A_0$;
  \item $f$ is trivial on $A_1$ and $\overline{f}$ is trivial on $\overline{A_1}$, hence $f$ and $\overline{f}$ are trivial on $A_1 \cap \overline{A_1}$;
  \item $f$ is trivial on $A_1$ and $< \overline{\alpha}, \overline{f}>$ is zero in $\mathsf{Stab}(\mathbb C)$ on $\overline{A''}$, hence $< {\alpha}, {f} >$ and $< \overline{\alpha}, \overline{f}>$ are zero morphisms on $A_1 \cap \overline{A''}$ in $\mathsf{Stab}(\mathbb C)$;
  \item similarly, $< {\alpha}, {f} >$ and $< \overline{\alpha}, \overline{f}>$ are zero morphisms on $A'' \cap \overline{A_1}$;
  \item $< {\alpha}, {f} >$ is zero on $A''$ and $< \overline{\alpha}, \overline{f}>$ is zero on $\overline{A''}$, and this implies that $< {\alpha}, {f} >$ and $< \overline{\alpha}, \overline{f}>$ are both zero on $A'' \cap \overline{A''}$ in $\mathsf{Stab}(\mathbb C)$.
  \end{itemize}
  By assumption $G$ is a torsion theory functor, hence it sends the trivial morphisms in $\mathsf{PreOrd}(\mathbb C)$ to zero morphisms in $\mathbb X$. A zero morphism in $\mathsf{Stab}(\mathbb C)$ is a morphism of the form
  $$\xymatrix{& A'\ar@{ >->}[dl]_{\beta} \ar[dr]^{g} & \\ A & & B}
 $$
  where $g$ is trivial in $\mathsf{PreOrd}(\mathbb C)$. Accordingly, in this case, $\overline{G}(< \beta, g > )= [0,0]$ is the zero morphism from $A$ to $B$ in $\mathbb X$. By assumption $G$ preserves finite coproducts, hence 
  $$G(A) = G(A_0) \coprod G (A_1 \cap \overline{A_1}) \coprod G(A_1 \cap \nonumber \overline{A''}) 
  \coprod G({A''} \cap \overline{A_1}) \coprod G(A'' \cap \overline{A''}),$$
  and from the observations above we know that $[f,0]$ and $[\overline{f}, 0]$ coincide on $G(A_0)$ and are zero morphisms on all the other components. It follows that $[f,0] = [\overline{f}, 0]$, and the definition of $\overline{G}$ is compatible with the congruence defining the morphisms in $\mathsf{Stab}(\mathbb C)$. 
  
  To prove that $\overline{G}$ is a functor consider two composable morphisms in $\mathsf{Stab}(\mathbb C)$
  $$\xymatrix{A \ar@{.>}[r]^{< \alpha, f >} & B \ar@{.>}[r]^{< \beta, g >} & C, }$$
  and the following composition diagram where the upper square is a pullback
  
  $$\xymatrix@=30pt{ &\overline{A}^ c  \ar@{ >->}[rd]^{\overline{\alpha}^c}& & \overline{A}  \ar@{>->}[ld]_{\overline{\alpha}} \ar[dr]^{\overline{f}}  & & & \\
&&  A' \ar@{ >->}[dl]_{\alpha}  \ar[dr]^{f} &  & B' \ar@{ >->}[dl]_{\beta}  \ar[dr]^{g} & & \\
& A  \ar@{.>}[rr]_{\langle \alpha, f \rangle}   & & B \ar@{.>}[rr]_{\langle \beta, g \rangle}    &&C\\
 & & {A'}^c \ar@{>->}[ul]_{}  \ar[ur]_{0} &  & {B'}^c \ar@{ >->}[ul]_{}   \ar[ur]_{0}& & 
 } $$  
and ${A'}^c$, ${B'}^c$ are the complements of $A'$ and $B'$ in $A$ and in $B$, respectively. We also consider the pullback

 \begin{equation}\label{p-back} 
 \xymatrix{ \overline{A}^c  \ar[d] \ar@{>->}[r] & A' \ar[d]^f \\
{B'}^c   \ar@{>->}[r] & B
 }
 \end{equation}
 
 expressing the fact that $\overline{A}^c = f^{-1}({B'}^c)$, and we observe that 
 $$ A = A' \coprod  {A'}^c = \overline{A}^c  \coprod \overline{A} \coprod  {A'}^c.$$
 In $\mathbb X$ we have to check that $[G(g), 0] [G(f), 0]$ and $[G(g \overline{f}), 0]$ coincide on $$G(A) =  G(\overline{A}^c)  \coprod G(\overline{A}) \coprod  G({A'}^c).$$
 On $G(\overline{A})$ we have $G(g) G(\overline{f})$ in both cases, hence
 $$
 \overline{G} (< \beta, g >) \cdot \overline{G} (<\alpha, f >) = \overline{G}(< \beta, g > <\alpha, f >).
 $$
 Now, on $\overline{A}^c$ the morphism $f$ factors through ${B'}^c$ (see diagram \eqref{p-back}), hence
 $$
 \overline{G} (< \beta, g >) \cdot \overline{G}(<\alpha, f >)
 $$
 is the zero morphism on $G({\overline{A}}^c)$. But $\overline{G}(< \beta, g  > < \alpha, f >)$ is also the zero morphism on $G({\overline{A}}^c)$, hence these two morphisms are equal on $G({\overline{A}}^c)$. Finally, on ${A'}^c$ we have $< \alpha, f > = 0$, hence again $$\overline{G} (< \beta, g >) \cdot \overline{G} (<\alpha, f >) = 0 = \overline{G}(< \beta, g \rangle>< \alpha, f >) $$ on $G({A'}^c)$, completing this part of the proof.
 
One clearly has that $\overline{G}(1_A) = G(1_A) = 1_{G(A)}$, since $G$ is a functor. To see that $\overline{G}$ preserves finite coproducts one has to observe that $G$ preserves finite coproducts and these are calculated in $\mathsf{Stab}(\mathbb C)$ as in $\mathsf{PreOrd}(\mathbb C)$ (see Corollary $6.3$ in \cite{BCG}). 
In order to check that $\overline{G}$ is a torsion theory functor, since $\overline{G}$ and $G$ coincide on objects, it will suffice to prove that $\overline{G}$ preserves the canonical short exact sequences in the torsion theory. This follows from Proposition \ref{Sigma-is-tt}, since the canonical short exact sequence in the torsion theory in $\mathsf{Stab}(\mathbb C)$ is the image by $\Sigma$ of the canonical short $\mathcal Z$-exact sequence in the pretorsion theory in $\mathsf{PreOrd}(\mathbb C)$ and, by assumption, $G$ preserves this kind of sequences.
\end{proof}

\section{The case of $\tau$-pretoposes}
The aim of this section is to prove that, when $\mathbb C$ is a $\tau$-pretopos (in the sense of Definition \ref{tau}), all the short exact sequences in $\mathsf{Stab}(\mathbb C)$ are images (up to isomorphism) by the functor $\Sigma \colon \mathsf{PreOrd}(\mathbb C) \rightarrow \mathsf{Stab}(\mathbb C)$ of a short $\mathcal Z$-exact sequence in $\mathsf{PreOrd}(\mathbb C)$.

\begin{Prop}
The stable category $\mathsf{Stab}(\mathbb C)$ has disjoint binary coproducts.
\end{Prop}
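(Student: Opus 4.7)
The plan is to verify directly the two defining conditions of a disjoint binary coproduct: the two coprojections into $A \coprod B$ are monomorphisms, and their pullback is the zero object of $\mathsf{Stab}(\mathbb C)$. By Corollary~6.3 of \cite{BCG} the coproduct of $A$ and $B$ in $\mathsf{Stab}(\mathbb C)$ is $\Sigma(A\coprod B)$, with coprojections $\Sigma(s_1)$ and $\Sigma(s_2)$ obtained by applying $\Sigma$ to the coprojections in $\mathsf{PreOrd}(\mathbb C)$. Since finite sums in the pretopos $\mathbb C$ are disjoint and universal, and coproducts in $\mathsf{PreOrd}(\mathbb C)$ are computed componentwise (Proposition~5.3 of \cite{BCG}), the coprojections $s_1, s_2$ are monomorphisms in $\mathsf{PreOrd}(\mathbb C)$ with $s_1(A) \cap s_2(B) = 0$. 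As $\Sigma$ preserves monomorphisms (Proposition~6.1 of \cite{BCG}), $\Sigma(s_1)$ and $\Sigma(s_2)$ are monomorphisms in $\mathsf{Stab}(\mathbb C)$.

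For the second condition, it suffices to prove that any cone $u\colon T \to \Sigma(A)$, $v\colon T \to \Sigma(B)$ in $\mathsf{Stab}(\mathbb C)$ satisfying $\Sigma(s_1)\circ u = \Sigma(s_2)\circ v$ consists of two zero morphisms: then the universal property of the zero object forces $0$ to be the pullback. Represent $u = \langle \alpha, u'\rangle$ and $v = \langle \beta, v'\rangle$ by partial maps, where $\alpha\colon T' \hookrightarrow T$ and $\beta\colon T'' \hookrightarrow T$ are complemented subobjects in $\mathsf{PreOrd}(\mathbb C)$, $u'\colon T' \to A$, $v'\colon T'' \to B$. The composites are then represented by $(\alpha, s_1 u')$ and $(\beta, s_2 v')$, whose equality in $\mathsf{Stab}(\mathbb C)$ yields a congruence diagram of the form \eqref{C-diagram} between them. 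On the common apex $T_0 \hookrightarrow T$ one has $s_1 u' = s_2 v'$, so this morphism factors through the intersection $s_1(A)\cap s_2(B) = 0$; strictness of the initial object in the pretopos $\mathbb C$, inherited by $\mathsf{PreOrd}(\mathbb C)$, then forces $T_0 = 0$. The remaining conditions of the congruence diagram collapse to the statement that $s_1 u'\colon T' \to A\coprod B$ and $s_2 v'\colon T''\to A\coprod B$ are trivial morphisms in $\mathsf{PreOrd}(\mathbb C)$.

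The main technical point is to upgrade triviality of $s_1 u'$ to triviality of $u'$ itself, after which Lemma~\ref{zero-morphisms} delivers $u = 0$ (and symmetrically $v = 0$). Given a factorisation $s_1 u' = k \circ j$ through a discrete preorder $(D,\Delta_D)$, the plan is to exploit extensivity of $\mathbb C$: pulling back the decomposition $A\coprod B$ along $k$ yields a splitting $D = D_A \coprod D_B$ with $D_A = k^{-1}(s_1(A))$ and $D_B = k^{-1}(s_2(B))$. Because the image of $s_1 u'$ lies in $s_1(A)$ and $s_1(A)\cap s_2(B) = 0$, universality of sums forces $j^{-1}(D_B) = 0$, so $j$ factors through $D_A$; composing with the unique $k_A\colon D_A \to A$ such that $s_1 k_A$ is the restriction of $k$ (which exists since $s_1$ is a monomorphism) produces a factorisation of $u'$ through the discrete preorder $(D_A, \Delta_{D_A})$. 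Hence $u'$ is trivial and $u = 0$, and symmetrically $v = 0$. Consequently every cone over $\Sigma(s_1)$ and $\Sigma(s_2)$ factors uniquely through $0$, so $0$ is their pullback in $\mathsf{Stab}(\mathbb C)$, completing the proof of disjointness.
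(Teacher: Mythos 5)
Your proof is correct and follows the same overall strategy as the paper: identify the coproduct and its coprojections via Corollary~6.3 of \cite{BCG}, show the coprojections are monomorphisms because $\Sigma$ preserves monomorphisms, and then show that any cone $(u,v)$ over the two coprojections must be the zero cone by extracting a congruence diagram, mapping its common apex into $A\times_{A\coprod B}B=0$, and invoking strictness of the initial object. The one place where you diverge is the final step, which you rightly flag as the main technical point: from the congruence diagram one only gets that $s_1u'$ and $s_2v'$ are trivial in $\mathsf{PreOrd}(\mathbb C)$, and you upgrade this to triviality of $u'$ and $v'$ themselves by an explicit extensivity argument (splitting the discrete object $D$ through which $s_1u'$ factors as $D_A\coprod D_B$, killing $j^{-1}(D_B)$ via the disjointness of $A$ and $B$, and cancelling the monomorphism $s_1$ in $\mathbb C$). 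The paper instead dispatches this in one line at the level of the stable category: $s_1u'$ trivial means $\Sigma(s_1)\circ u=0$ by Lemma~\ref{zero-morphisms}, and since $\Sigma(s_1)$ is a monomorphism in $\mathsf{Stab}(\mathbb C)$ (Proposition~\ref{Sigma-is-tt}) one cancels it to get $u=0$. Your version is more laborious but entirely elementary and self-contained within $\mathsf{PreOrd}(\mathbb C)$; the paper's version reuses the already-established monomorphism preservation a second time and is correspondingly shorter. Both are valid.
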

\begin{proof}
Consider any commutative diagram in $\mathsf{Stab}(\mathbb C)$ 
$$
\xymatrix{ C \ar[r]^{< \alpha, f >} \ar[d]_{< \beta, g>} & A \ar[d]^{\Sigma(s_A) } \\
B \ar[r]_-{\Sigma(s_B)} & A \coprod B.
}
$$
where $s_A$ and $s_B$ denote the coprojections of the coproduct in $\PreOrd C$. This means that there is a congruence diagram
$$
\xymatrix@=35pt{
{A_0^{1}}^c \ar@{>->}[rr]^{} & & A_1 \ar@{>->}[ld]^{\alpha} \ar[r]^{f}  &A \ar[rd]^-{s_A} & & \\
A_0 \ar@{ >->}[drr]_{\alpha_1} \ar@{ >->}[rru]^{\alpha_0} \ar@{ >->}[r] & A   & & & A \coprod B   \\
{A_0^{2}}^c \ar@{>->}[rr]_{}  & & A_2 \ar@{>->}[ul]_{\beta} \ar[r]_{g} &B \ar[ru]_-{s_B}  & & 
 }
$$
In the category $\mathsf{PreOrd}(\mathbb C)$ the equality $s_A f \alpha_0 = s_B g \alpha_1$ induces a unique morphism $A_0 \rightarrow A \times_{A \coprod B} B$ to the pullback $A \times_{A \coprod B} B$ of $s_A$ and $s_B$. Since $A \times_{A \coprod B} B$ is the initial object $0$ in $\mathsf{PreOrd}(\mathbb C)$ and $0$ is strict, it follows that $A_0 = 0$. This implies that ${A_0^{1}}^c = A_1$ and $ {A_0^{2}}^c = A_2$, and the morphisms $s_A f$ and $s_B g$ are both trivial.  Since $\Sigma(s_A)$ and $\Sigma(s_B)$ are monomorphisms in $\mathsf{Stab}(\mathbb C)$ (by Proposition \ref{Sigma-is-tt}), it follows that $< \alpha, f >= 0$ and $< \beta, g> =0$ . From the fact that $0$ is a zero object in $\mathsf{Stab}(\mathbb C)$ it follows that the square 
$$
\xymatrix{ 0 \ar[r]^{} \ar[d]_{} & A \ar[d]^{\Sigma(s_A) } \\
B \ar[r]_-{\Sigma(s_B)} & A \coprod B.
}
$$
is a pullback in $\mathsf{Stab}(\mathbb C)$, as desired.

\end{proof}
\begin{definition}\label{def.preuniversal}
Let $\mathbb X$ be a category with binary coproducts. One says that binary coproducts in $\mathbb X$ are \emph{pre-universal} if, given any morphism 
$f \colon C \rightarrow A \coprod B$, there exists a commutative diagram of the form
\begin{equation}\label{pre-universal}
\xymatrix@=30pt{
 {A' } \ar[r]^{s_{A'}} \ar@{.>}[d]_{f_A}  \ar[d]  & C  \ar[d]_{f} &  {B'}  \ar[l]_{s_{B'}}  \ar[d]^{f_B}  \\
A \ar[r]_-{{s}_A} & A \coprod B & B \ar[l]^-{{s}_B} 
}
\end{equation}
where the top row of the diagram is a sum (i.e. $C = A' \coprod B'$ and $s'_A$ and $s'_B$ are the coprojections).
\end{definition}

Notice that the (non-extensive) category of pointed sets has pre-universal binary coproducts. We now want to prove that also the stable category $\mathsf{Stab}(\mathbb C)$ has this property. For ease of notation, since in $\mathsf{Stab}(\mathbb C)$ binary coproducts (exist and) are computed as in $\PreOrd C$ \cite[Corollary~6.3]{BCG}, in the sequel we shall often write $\xymatrix{A \ar[r]^-{{s}_A} & A \coprod B & B \ar[l]_-{{s}_B} }$ for the coprojections of the coproduct of $A$ and $B$ both in $\mathsf{Stab}(\mathbb C)$ and in $\PreOrd C$.

\begin{Prop}\label{Stable-preuniversal}
The stable category $\mathsf{Stab}(\mathbb C)$ has pre-universal binary coproducts.
\end{Prop}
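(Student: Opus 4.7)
The plan is to fix a representative span for the given morphism $\langle \alpha, f\rangle \colon C \to A \coprod B$ and then split its source using extensivity of $\mathsf{PreOrd}(\mathbb C)$. Pick a representative $\xymatrix@C=15pt{C & C' \ar@{>->}[l]_{\alpha} \ar[r]^-{f} & A \coprod B}$, and let $\alpha^c \colon C'' \hookrightarrow C$ be the complement of $\alpha$ in $\mathsf{PreOrd}(\mathbb C)$, so that $C = C' \coprod C''$. The coprojections $s_A, s_B$ of $A \coprod B$ in $\mathsf{PreOrd}(\mathbb C)$ are mutually complementary complemented subobjects; pulling them back along $f$ produces complemented subobjects $C_A := f^{-1}(A)$ and $C_B := f^{-1}(B)$ of $C'$ with $C' = C_A \coprod C_B$ in $\mathsf{PreOrd}(\mathbb C)$. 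Write $k_A, k_B$ for the coprojections of this latter sum and $f_A \colon C_A \to A$, $f_B \colon C_B \to B$ for the corresponding restrictions of $f$, so that $s_A f_A = f k_A$ and $s_B f_B = f k_B$.

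With these pieces in hand, set $A' := C_A$ and $B' := C_B \coprod C''$, so that $C = A' \coprod B'$ is a sum in $\mathsf{PreOrd}(\mathbb C)$, and hence in $\mathsf{Stab}(\mathbb C)$ by Corollary 6.3 of \cite{BCG}. For the vertical morphisms in diagram \eqref{pre-universal} I would take $f_{A'} := \Sigma(f_A) \colon A' \to A$ and $f_{B'} := \langle j, f_B \rangle \colon B' \to B$, where $j \colon C_B \hookrightarrow B'$ is the first coprojection of $B' = C_B \coprod C''$. By Proposition \ref{justification} the morphism $f_{B'}$ acts as $f_B$ on the $C_B$ component and as zero on $C''$.

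It then remains to check that both squares of \eqref{pre-universal} commute in $\mathsf{Stab}(\mathbb C)$. For the $A$-square, the composition $\langle \alpha, f\rangle \circ \Sigma(s_{A'})$ is computed by pulling back $\alpha$ along $s_{A'} = \alpha k_A$, which — since $\alpha$ is a monomorphism — is just $k_A$ itself; the composite is therefore represented by $A' \xleftarrow{=} A' \xrightarrow{f k_A} A \coprod B$, and on the other side $\Sigma(s_A) \circ f_{A'} = \Sigma(s_A f_A) = \Sigma(f k_A)$ gives exactly the same span. For the $B$-square, the pullback of $\alpha \colon C' \hookrightarrow C$ along $s_{B'}$ equals $j \colon C_B \hookrightarrow B'$ because $C' \cap C'' = 0$ in $C$; the two composites then both unfold to the span $\langle j, s_B f_B \rangle$. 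The main obstacle is bookkeeping rather than conceptual: one has to unpack the span composition in $\mathsf{PaPreOrd}(\mathbb C)$ carefully and verify that each subobject involved really is complemented in $\mathsf{PreOrd}(\mathbb C)$, which depends on the preorder pullback conditions from the characterisation of complemented subobjects recalled in the Preliminaries, combined with the extensivity of $\mathbb C$.
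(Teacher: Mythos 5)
Your proof is correct and follows essentially the same route as the paper: decompose $C'$ as the coproduct of the inverse images $f^{-1}(A)$ and $f^{-1}(B)$, absorb the complement of $C'$ into one of the two summands with a zero component, and verify the two squares by unwinding the span composition together with Proposition~\ref{justification}. The only difference is that you attach the complement $C''$ to the $B$-side rather than the $A$-side, which is precisely the alternative choice the paper itself points out in the remark following this proposition.
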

\begin{proof}
Let us consider any morphism $< \alpha, f >\colon C \rightarrow A \coprod B$, and then the diagram
$$
\xymatrix@=30pt{C' \ar[rrd]^{\Sigma(f)} \ar@{>->}[rd]_{\Sigma(\alpha)}& & \\
& C \ar[r]^(.4){< \alpha, f >} & A \coprod B \\
{C'}^c \ar[rru]_0 \ar[ru]^{\Sigma(\gamma)} & & 
}
$$
 in $\mathsf{Stab}(\mathbb C)$ and 
$$
\xymatrix@=30pt{
A''  \ar[r]^{s_{A''}} \ar@{.>}[d]_{f_A''}  \ar[d]  & C'  \ar[d]_{f} &  B''  \ar[l]_{s_{B''}}  \ar[d]^{f_B''}  \\
A \ar[r]_-{{s}_A} & A \coprod B & B \ar[l]^-{{s}_B} 
}
$$
 in $\mathsf{PreOrd}(\mathbb C)$, respectively, 
where in the second one $A''$ and $B''$ are the inverse images along $f$, and then $C' = A'' \coprod B''$. Note that, by Proposition \ref{justification}, in the stable category we have the equality  $< \alpha, f> \Sigma(\gamma) = 0$.
There is then the following factorisation $\Sigma(f_A)$ in $\mathsf{Stab}(\mathbb C)$:
$$
\xymatrix@=40pt{A'' \ar[rrd]^{\Sigma(f_A'')} \ar[rd]_{\sigma_{A''}}& & \\
& A'' \coprod {C'}^c\ar[r]^{\Sigma(f_A)} & A  \\
{C'}^c \ar[rru]_0 \ar[ru]^{\sigma_{{C'}^c}} & & 
}
$$
where $\sigma_{A''}$ and $\sigma_{{C'}^c}$ are the coproduct coprojections.
One then sets $A' = {A''} \coprod {{C'}^c}$ and gets the diagram 
\begin{equation}\label{pre-universal-proof}
\xymatrix@=30pt{
 {A' } \ar[r]^{s_{A'}} \ar@{.>}[d]_{\Sigma(f_A)}  \ar[d]  & C  \ar[d]_{<\alpha, f>} &  {B''}  \ar[l]_{\Sigma(\alpha) s_{B''}}  \ar[d]^{\Sigma(f_B'')}  \\
A \ar[r]_-{{s}_A} & A \coprod B & {B,} \ar[l]^-{{s}_B} 
}
\end{equation}
whose commutativity can be checked as follows.
We have
$$C = C' \coprod {C'}^c = A'' \coprod B'' \coprod {C'}^c = A' \coprod B''.
$$
By Proposition \ref{justification}, we know that
$$
< \alpha, f> s_{A'} \sigma_{A''} = < \alpha, f> \Sigma(\alpha) s_{A''}= \Sigma(f) s_{A''}
$$
and 
$$
< \alpha, f> s_{A'} \sigma_{C'^c} = <\alpha , f> \Sigma(\gamma) = 0.
$$
Since we also have that
$$
s_A \Sigma(f_A)  \sigma_{A''} = s_A \Sigma(f_A'') = \Sigma(f) s_{A''}.
$$
and
$$
s_A \Sigma(f_A)  \sigma_{C'^c} = s_A 0 = 0,
$$ we conclude that $< \alpha, f> s_{A'} = s_A \Sigma(f_A)$, as desired. 
On the other hand, the following equalities show that the right-hand side of diagram \eqref{pre-universal-proof} commutes: 
$$
< \alpha, f > \Sigma(\alpha)  s_{B''} = \Sigma(f)  s_{B''} = s_B \Sigma({f_B''}).
$$
\end{proof}

\begin{remark}
Let us observe that the choice of the objects $A'$ and $B'$ in Definition \ref{def.preuniversal} is by no means unique. Indeed, in the proof of Proposition \ref{Stable-preuniversal}, we could as well have chosen $A'=A''$ and $B'=B''\coprod {C'}^c$ (with reference to diagram \eqref{pre-universal-proof}). So the pre-universality of binary coproducts could be rephrased as the existence of three objects $A'', B'', C''$, respectively mapped by $f$ in $A, B, 0$, and such that $C=A'' \coprod B'' \coprod C''$.
\end{remark}
\begin{lemma}\label{sum-of-kernels}
Let $\mathbb X$ be a category with a zero object and binary coproducts which are disjoint and pre-universal. Assume that $$\xymatrix{K \ar[r]^k& A \ar[r]^f & B} $$ and $$\xymatrix{N \ar[r]^n& C \ar[r]^g & D} $$ are composable morphisms such that $k = \ker (f)$ and $n = \ker(g)$. Then the morphism $k \coprod n \colon K \coprod N \rightarrow A \coprod C$ is the kernel of $f \coprod g \colon A \coprod C \rightarrow B \coprod D$.
\end{lemma}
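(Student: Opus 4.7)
My approach is to verify directly that $k \coprod n$ satisfies the three defining conditions of the kernel of $f \coprod g$: that the composite $(f \coprod g)(k \coprod n)$ is zero, that every morphism into $A \coprod C$ annihilated by $f \coprod g$ factors through $k \coprod n$, and that this factorisation is unique. The first point is immediate: post-composing $(f \coprod g)(k \coprod n)$ with each coprojection of $K \coprod N$ gives $s_B(fk) = 0$ and $s_D(gn) = 0$, and the universal property of $K \coprod N$ yields $(f \coprod g)(k \coprod n) = 0$.

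For the factorisation, given $h \colon X \to A \coprod C$ with $(f \coprod g) h = 0$, I would invoke pre-universality of the coproduct $A \coprod C$ to produce a decomposition $X = A' \coprod C'$ together with morphisms $h_A \colon A' \to A$ and $h_C \colon C' \to C$ such that $h s_{A'} = s_A h_A$ and $h s_{C'} = s_C h_C$. Since disjointness of coproducts makes $s_A, s_B, s_C, s_D$ monomorphisms, from $s_B (f h_A) = (f \coprod g) h s_{A'} = 0$ one concludes $f h_A = 0$, so $h_A$ lifts uniquely through the kernel $k$ as $h_A = k u$; symmetrically $h_C = n v$. The morphism $\psi := u \coprod v \colon X \to K \coprod N$ then satisfies $(k \coprod n) \psi = h$, verifiable on each coprojection.

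The real work lies in uniqueness. Given any $w \colon X \to K \coprod N$ with $(k \coprod n) w = h$, I would show $w s_{A'} = s_K u$ (and symmetrically $w s_{C'} = s_N v$), which forces $w = \psi$ by the universal property of $X = A' \coprod C'$. Applying pre-universality to $w s_{A'}$ yields a decomposition $A' = P \coprod Q$ with $w s_{A'} s_P = s_K p$ and $w s_{A'} s_Q = s_N q$. Post-composing the $Q$-part with $k \coprod n$ gives $s_C n q = s_A h_A s_Q$; since disjointness makes the pullback of $s_A$ and $s_C$ the zero object, this common factorisation must route through $0$, forcing both $n q = 0$ and $h_A s_Q = 0$. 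Monicity of $n$ then gives $q = 0$, and since $h_A = k u$ with $k$ a monomorphism, $h_A s_Q = 0$ gives $u s_Q = 0$; hence both $w s_{A'}$ and $s_K u$ vanish on $Q$. A parallel argument on $P$ yields $s_A k p = s_A k u s_P$, whence $p = u s_P$ by monicity of $s_A$ and $k$, so $w s_{A'} s_P = s_K u s_P$. Combining these on $A' = P \coprod Q$ gives $w s_{A'} = s_K u$ as desired.

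The main obstacle is the non-canonical nature of the pre-universal decomposition: different morphisms into $A \coprod C$ or into $K \coprod N$ may be split in different ways, so one cannot directly compare $w$ with $\psi$ on a single shared splitting. The way out is to couple disjointness with the monomorphism property of the kernels $k$ and $n$: disjointness forces cross-components between $s_A$-images and $s_C$-images into the zero object, and the monicity of $k$ and $n$ then pins down the surviving components independently of the chosen splitting.
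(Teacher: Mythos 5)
Your proof is correct and follows essentially the same strategy as the paper's: pre-universality splits any test morphism into components landing in $A$ and $C$, disjointness together with monicity of the coprojections and of $k$, $n$ reduces each component to the universal property of the individual kernels, and the same three ingredients drive the uniqueness argument. The only difference is organizational: in the uniqueness step the paper decomposes the domain twice (once via $h$, once via the competing factorization) and then forms a common $2\times 2$ refinement $\hat{E}_{ij}$, whereas you apply pre-universality directly to the composites $w s_{A'}$ and $w s_{C'}$, obtaining the same cross-term analysis with one fewer layer of decomposition.
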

\begin{proof}
First of all the composite $(f \coprod g) (k \coprod n)$ is clearly the zero morphism:
$$(f \coprod g) (k \coprod n) = fk \coprod gn = 0 \coprod 0 = 0.$$
Next consider any arrow $h \colon E \rightarrow A \coprod C$ such that $(f \coprod g) h = 0$. By the pre-universality of binary coproducts we can form the commutative diagram
$$
\xymatrix{
 E_1 \ar[r]^{s_1} \ar[d]_{h_1}  \ar[d]  & E  \ar[d]_h & E_2 \ar[l]_{s_2} \ar[d]^{h_2}  \\
A \ar[r]_-{s_A} & A \coprod C & C \ar[l]^-{s_C}
}
$$
where $E = E_1 \coprod E_2$. We have the equality
$$s_B f h_1 = (f \coprod g) s_A h_1 = (f \coprod g) h s_1 =0,$$
where $s_B \colon B \rightarrow B \coprod D$ is a monomorphism, hence there is a unique morphism $m_1$ such that $k m_1 = h_1$. Similarly, one can prove that there is a unique $m_2$ such that $n m_2 = h_2$. The universal property of the coproduct $E =  E_1 \coprod E_2$ gives a unique morphism $m = m_1 \coprod m_2 \colon E \rightarrow K \coprod N$ such that
$$(k \coprod n ) m s_1 =  (k \coprod n) s_K m_1 = s_A k m_1 = s_A h_1 = h s_1.$$
Symmetrically, one has that
$(k \coprod n ) m s_2  = h s_2$, yielding the equality $(k \coprod n ) m = h$. 
To check the uniqueness of the factorization consider then another morphism $r \colon E \rightarrow K \coprod N$ such that $(k \coprod n ) r = h$. Again by pre-universality we have a commutative diagram
$$
\xymatrix{
 \tilde{E}_1 \ar[r]^{\tilde{s}_1} \ar[d]_{r_1}  \ar[d]  & E  \ar[d]_r & \tilde{E}_2 \ar[l]_{\tilde{s}_2} \ar[d]^{r_2}  \\
K \ar[r]_-{s_K} & K \coprod N & N \ar[l]^-{s_N}
}
$$
where $E=\tilde{E}_1 \coprod \tilde{E}_2$.
Consider the following commutative diagram, where $E_1 = \hat{E}_{11} \coprod  \hat{E}_{12}$ and $E_2 = \hat{E}_{21} \coprod  \hat{E}_{22}$, 
$$
\xymatrix@=30pt{
 \hat{E}_{11} \ar[r]^{\hat{s}_{11}} \ar[d]_{s_{11}}  \ar[d]  & {E}_1  \ar[d]_{s_1} &  \hat{E}_{12}  \ar[l]_{\hat{s}_{12}}  \ar[d]^{s_{12}}  \\
\tilde{E}_1 \ar[r]_-{\tilde{s}_1} & E & \tilde{E}_2 \ar[l]^-{\tilde{s}_2} \\
 \hat{E}_{21} \ar[r]_{\hat{s}_{21}} \ar[u]^{s_{21}}  & E_2 \ar[u]^{s_2}  &  \hat{E}_{22} \ar[l]^{\hat{s}_{22}} \ar[u]_{s_{22}}
}
$$
that exists by the pre-universality of binary coproducts. By assumption the restrictions to $ \hat{E}_{11}$ of $m$ and $r$ are equal when composed with $k \coprod n$
$$(k \coprod n) r s_1 \hat{s}_{11} = (k \coprod n) m s_1 \hat{s}_{11}$$
and these composites both factor through $s_A k$
$$ (k \coprod n) m s_1 \hat{s}_{11} = h s_1  \hat{s}_{11}  = s_A h_1  \hat{s}_{11}  =  s_A k  m_1 \hat{s}_{11}$$
and $s_A k$ is a monomorphism (as a composite of monomorphisms). This means that $m$ and $r$ induce a unique morphism $ m_1 \hat{s}_{11} = r_1  \hat{s}_{11} \colon  \hat{E}_{11} \rightarrow K$. 
Similarly, $m$ and $r$ also induce a unique morphism $m_2 \hat{s}_{22} = r_2 \hat{s}_{22}  \colon \hat{E}_{22} \rightarrow N$.
On $\hat{E}_{12}$ we get the following equalities
$$
 s_A h_1 \hat{s}_{12} =  h s_1 \hat{s}_{12}
 = (k \coprod n) m  s_1 \hat{s}_{12}
 = (k \coprod n) r  s_1 \hat{s}_{12}
 $$
 $$
 = (k \coprod n) r \tilde{s}_2 s_{12} 
 = (k \coprod n) s_N r_2 s_{12} 
 = s_C n r_2 s_{12}
$$
showing that the induced morphisms $\hat{E}_{12} \rightarrow A$ and $\hat{E}_{12} \rightarrow C$ are the zero morphisms, 
since the coproduct $A \coprod C$ is disjoint. But $k$ and $n$ are both monomorphisms, hence both the morphisms $\hat{E}_{12} \rightarrow K$ and $\hat{E}_{12} \rightarrow N$
are zero as well.
Similarly, the morphisms $\hat{E}_{21} \rightarrow K$ and $\hat{E}_{21} \rightarrow N$ are also zero. By composing with $s_K \colon K \rightarrow K \coprod N$ and $s_N \colon N \rightarrow K \coprod N$ we obtain that the restrictions $\hat{E}_{11} \rightarrow K \coprod N$ of $m$ and $r$ are equal. Similarly, the restrictions $\hat{E}_{12} \rightarrow K \coprod N$ of $m$ and $r$ are both zero, hence the restriction of $m$ and $r$ to $E_1$ are equal. In a similar way one checks that the restrictions of $m$ and $r$ to $E_2$ are equal, and, finally, $m=r$.
\end{proof}
\begin{Prop}\label{kernels-in-Stab}
The category $\mathsf{Stab}(\mathbb C)$ has kernels. If $< \alpha, f >\colon A \rightarrow B$ is a morphism in $\mathsf{Stab}(\mathbb C)$ as in diagram \eqref{morphism-f}, its kernel is given by
$$\xymatrix@=35pt{{\Sigma (k \coprod 1_{{A'}^c}) \, \colon \quad } K \coprod {{A'}^c} \ar[r]^-{  } & A' \coprod {{A'}^c}} $$
where $k$ is the ${\mathcal Z}$-kernel of $f$ in $\mathsf{PreOrd}(\mathbb C)$ and ${A'}^c$ is the complement of $A'$ in $A$.
\end{Prop}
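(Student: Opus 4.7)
The plan is to realise $\langle \alpha, f \rangle$ as a coproduct of two morphisms inside $\mathsf{Stab}(\mathbb{C})$ and then invoke Lemma~\ref{sum-of-kernels}. By Proposition~\ref{justification}, the diagram \eqref{commutative-just} commutes, so with respect to the coproduct decomposition $A = A' \coprod {A'}^c$ (which is preserved by $\Sigma$ in view of Proposition~\ref{Sigma-is-tt}) the morphism $\langle \alpha, f \rangle \colon A \to B$ is precisely the arrow induced by $\Sigma(f) \colon A' \to B$ and the zero morphism $0 \colon {A'}^c \to B$. Regarding $B$ as the coproduct $B \coprod 0$ in $\mathsf{Stab}(\mathbb{C})$ (which is legitimate since $0$ is the zero object), one may therefore rewrite
$$\langle \alpha, f \rangle \;=\; \Sigma(f) \coprod (\,{A'}^c \to 0\,) \;\colon\; A' \coprod {A'}^c \,\longrightarrow\, B \coprod 0 \,\cong\, B.$$

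Next I would compute the kernel of each summand separately. The kernel of the unique arrow ${A'}^c \to 0$ is tautologically the identity $1_{{A'}^c}$, because $0$ is a zero object. For the kernel of $\Sigma(f)$, I would invoke the result of \cite{BCG} that was already cited in the proof of Proposition~\ref{Sigma-is-tt} (Proposition~7.1 in \cite{BCG}): the functor $\Sigma$ sends $\mathcal{Z}$-kernels in $\mathsf{PreOrd}(\mathbb{C})$ to kernels in $\mathsf{Stab}(\mathbb{C})$. Hence, writing $k \colon K \to A'$ for the $\mathcal{Z}$-kernel of $f$ in $\mathsf{PreOrd}(\mathbb{C})$, the morphism $\Sigma(k)$ is the kernel of $\Sigma(f)$ in $\mathsf{Stab}(\mathbb{C})$.

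Now I would apply Lemma~\ref{sum-of-kernels}. Its hypotheses are met in $\mathsf{Stab}(\mathbb{C})$: this category is pointed, it has binary coproducts, and these coproducts are disjoint and pre-universal by the two Propositions established just before. The lemma then yields that the kernel of the coproduct morphism $\Sigma(f) \coprod (\,{A'}^c \to 0\,)$ is the coproduct of the respective kernels, namely
$$\Sigma(k) \coprod 1_{{A'}^c} \;\colon\; K \coprod {A'}^c \,\longrightarrow\, A' \coprod {A'}^c \,=\, A.$$
Since $\Sigma$ preserves finite coproducts (Proposition~\ref{Sigma-is-tt}) and is the identity on the complemented subobject ${A'}^c$, one identifies this kernel with $\Sigma(k \coprod 1_{{A'}^c})$, which is the claimed formula.

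The only real obstacle is the existence of the $\mathcal{Z}$-kernel $k$ of $f$ in $\mathsf{PreOrd}(\mathbb{C})$ and the fact that $\Sigma$ carries it to a genuine kernel in $\mathsf{Stab}(\mathbb{C})$; both are taken from \cite{BCG}. Everything else is a routine combination of Proposition~\ref{justification} (to rewrite $\langle \alpha, f \rangle$ as a coproduct of maps), Lemma~\ref{sum-of-kernels} (to distribute the kernel over the coproduct), and the coproduct-preservation of $\Sigma$ (to absorb the identity factor on ${A'}^c$).
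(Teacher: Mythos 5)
Your proposal is correct and follows essentially the same route as the paper: both decompose $\langle \alpha, f\rangle$ as $f \coprod 0$ on $A = A' \coprod {A'}^c$ via Proposition~\ref{justification}, use Proposition~7.1 of \cite{BCG} to get the kernel $\Sigma(k)$ of $\Sigma(f)$, and then apply Lemma~\ref{sum-of-kernels} to assemble the kernel $k \coprod 1_{{A'}^c}$. The only cosmetic difference is that the paper obtains $1_{{A'}^c}$ as the image under $\Sigma$ of the $\mathcal Z$-kernel of ${A'}^c \to 1$, whereas you observe directly that the kernel of ${A'}^c \to 0$ is the identity; these amount to the same thing.
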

\begin{proof}
In $\mathsf{PreOrd}(\mathbb C)$ we have the $\mathcal Z$-kernels $$\xymatrix{K \ar[r]^k & A' \ar[r]^f & B }$$ and 
$$\xymatrix{{A'}^c \ar@{=}[r] & {A'}^c \ar[r]^{\tau} & 1. }$$
By Proposition $7.1$ in \cite{BCG} we know that $\Sigma \colon \mathsf{PreOrd}(\mathbb C) \rightarrow \mathsf{Stab}(\mathbb C)$ sends these $\mathcal Z$-kernels to the kernels 
$$\xymatrix{K \ar[r]^k & A' \ar[r]^f & B }$$ and $$\xymatrix{{A'}^c \ar@{=}[r] & {A'}^c \ar[r] & 0 }$$
in $\mathsf{Stab}(\mathbb C)$, respectively. By Lemma \ref{sum-of-kernels} we know that $k \coprod 1_{{A'}^c}$ is then the kernel of $f \coprod 0 \colon A' \coprod {A'}^c \rightarrow B \coprod 0 = B$. From Proposition \ref{justification} it follows that in the following diagram in $\mathsf{Stab}(\mathbb C)$
$$\xymatrix@=35pt{K \coprod {A'}^c \ar[r]^-{k \coprod 1_{{A'}^c}}& A \ar[r]^{< \alpha, f >} & B }$$
the morphism $k \coprod 1_{{A'}^c}$ is the kernel of $< \alpha, f >$, as desired.
\end{proof}
We now recall the following definition that had a role in proving some of the results in \cite{BCG}:
\begin{definition}\label{tau}
A $\tau$-pretopos is a pretopos $\mathbb C$ with the property that the transitive closure of any relation on an object in $\mathbb C$ exists in $\mathbb C$.
\end{definition}

Any $\sigma$-pretopos (i.e. a pretopos admitting denumerable unions of subobjects, that are preserved by pullbacks) is in particular a $\tau$-pretopos \cite{Elep}, as well as any elementary topos \cite[Proposition~7.7]{BCG}.

\begin{Prop}\label{Stab-has-kernels}
When $\mathbb C$ is a $\tau$-pretopos, two composable morphisms in $\mathsf{Stab}(\mathbb C)$ 
$$\xymatrix{A \ar@{.>}[r]^{< \alpha, f >} & B \ar@{.>}[r]^{< \beta, g >}&C }$$
form a short exact sequence if and only if, up to isomorphism, they are the image by $\Sigma$ of a short exact sequence in $\mathsf{PreOrd}(\mathbb C)$.
\end{Prop}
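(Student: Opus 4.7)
The $(\Leftarrow)$ direction is immediate from Theorem~7.14 of \cite{BCG}, which asserts that $\Sigma$ transforms short $\mathcal{Z}$-exact sequences in $\mathsf{PreOrd}(\mathbb C)$ into short exact sequences in the pointed category $\mathsf{Stab}(\mathbb C)$; being a short exact sequence is invariant under isomorphism of diagrams.

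For the converse, given a short exact sequence $A \xrightarrow{\langle\alpha,f\rangle} B \xrightarrow{\langle\beta,g\rangle} C$ in $\mathsf{Stab}(\mathbb C)$, I would represent $\langle\beta,g\rangle$ by $g\colon B'\to C$ with $B = B' \coprod {B'}^c$ in $\mathsf{PreOrd}(\mathbb C)$, and form $u := k \coprod 1_{{B'}^c}\colon K \coprod {B'}^c \to B$, where $k\colon K\to B'$ is the $\mathcal{Z}$-kernel of $g$. By Proposition~\ref{kernels-in-Stab}, $\Sigma(u)$ is the kernel of $\langle\beta,g\rangle$ in $\mathsf{Stab}(\mathbb C)$, so $\langle\alpha,f\rangle \cong \Sigma(u)$ by uniqueness of kernels. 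The $\tau$-pretopos hypothesis ensures that the $\mathcal{Z}$-cokernel $v\colon B\to Y$ of $u$ exists in $\mathsf{PreOrd}(\mathbb C)$, and by Lemma~\ref{existence-cokernel} its image $\Sigma(v)$ is the cokernel of $\Sigma(u) \cong \langle\alpha,f\rangle$, hence isomorphic to $\langle\beta,g\rangle$. This produces the candidate sequence $K \coprod {B'}^c \xrightarrow{u} B \xrightarrow{v} Y$ in $\mathsf{PreOrd}(\mathbb C)$ whose image under $\Sigma$ is isomorphic, as a diagram, to the given short exact sequence; the task then reduces to verifying that this sequence is $\mathcal{Z}$-exact in $\mathsf{PreOrd}(\mathbb C)$.

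Only the condition that $u$ is the $\mathcal{Z}$-kernel of $v$ requires work, and I expect this to be the main obstacle. Uniqueness of factorizations through $u$ is straightforward, since $k$ is a monomorphism (as a $\mathcal{Z}$-kernel in $\mathsf{PreOrd}(\mathbb C)$) and the disjoint coproduct of monomorphisms is a monomorphism in a pretopos, so $u$ is monic. For existence, given $\lambda\colon Y'\to B$ in $\mathsf{PreOrd}(\mathbb C)$ with $v\lambda$ trivial, the $\mathsf{Stab}(\mathbb C)$-level kernel property of $\Sigma(u)$ provides a factorization $\Sigma(\lambda) = \Sigma(u) \cdot \langle\gamma,\lambda''\rangle$ with $\gamma\colon Y'' \hookrightarrow Y'$ a complemented subobject and $\lambda''\colon Y''\to K\coprod{B'}^c$. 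Invoking Proposition~\ref{justification}, the equality $\langle\gamma, u\lambda''\rangle = \langle 1_{Y'}, \lambda\rangle$ in $\mathsf{Stab}(\mathbb C)$ unfolds via a congruence diagram into a complemented decomposition $Y' = Y_0 \coprod {Y_0^1}^c$ on which $\lambda$ agrees with $u\lambda''\alpha_0^2$ on $Y_0$ and is trivial on ${Y_0^1}^c$. To lift $\lambda|_{{Y_0^1}^c}$ through $u$, factor it as ${Y_0^1}^c\to D\to B$ with $D$ discrete; by universality of sums in the pretopos, $D = D_1\coprod D_2$ decomposes along $B = B'\coprod{B'}^c$, and since $D_1$ inherits a discrete preorder the morphism $D_1\to B'$ is trivial and lifts uniquely through the $\mathcal{Z}$-kernel $k$. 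Assembling this lift on ${Y_0^1}^c$ together with $\lambda''\alpha_0^2$ on $Y_0$ yields the desired $\lambda'\colon Y'\to K\coprod{B'}^c$ with $u\lambda' = \lambda$, completing the verification.
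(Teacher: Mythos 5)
Your proposal follows the same route as the paper's proof: the $(\Leftarrow)$ direction is quoted from Theorem 7.14 of \cite{BCG}, and for the converse you identify $\langle\alpha,f\rangle$ with $\Sigma(k\coprod 1_{{B'}^c})$ via Proposition \ref{kernels-in-Stab} and then identify $\langle\beta,g\rangle$ with the image under $\Sigma$ of the $\mathcal Z$-cokernel of $k\coprod 1_{{B'}^c}$, using the $\tau$-pretopos hypothesis exactly where the paper does. The one place you go beyond the paper is the final paragraph: the paper stops once the given sequence is exhibited as $\Sigma$ of the pair $(n,\,\mathcal Z\text{-}\mathsf{coker}(n))$, leaving implicit that this pair is actually short $\mathcal Z$-exact in $\mathsf{PreOrd}(\mathbb C)$, whereas you verify that $u=k\coprod 1_{{B'}^c}$ is the $\mathcal Z$-kernel of its $\mathcal Z$-cokernel. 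That verification is sound (monicity of $u$ gives uniqueness; the congruence-diagram decomposition of $Y'$ plus the splitting of the discrete object $D$ along $B=B'\coprod{B'}^c$ gives existence of the lift), though the decomposition of $Y'$ you use comes directly from the definition of the congruence $\sim$ rather than from Proposition \ref{justification}, which is the wrong citation there.
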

\begin{proof}
One implication follows from Theorem $7.14$ in \cite{BCG} (for which the assumption that $\mathbb C$ is a $\tau$-pretopos is needed).
Conversely, consider a short exact sequence 
\begin{equation}\label{exact-seq-stab1}
\xymatrix{A \ar@{.>}[r]^{< \alpha, f >} & B \ar@{.>}[r]^{<\beta, g >}&C }
\end{equation}
in $\mathsf{Stab}(\mathbb C)$. By Proposition \ref{kernels-in-Stab} we know that the kernel of $< \beta, g >$ is a morphism of type $\Sigma(n)$:
$$< \alpha, f > = \ker < \beta, g > = \Sigma (n).$$
More precisely, going back to the construction of the kernel $\mathsf{Ker}(< \beta, g>)$ as in Proposition \ref{kernels-in-Stab} we have the diagram
$$
\xymatrix@=45pt{ & K \ar@{>->}[r]^k  \ar@{>->}[dl]_{s_1} & B' \ar@{ >->}[dl]_{\beta} \ar[dr]^(.7)g  & {B'}^c \ar@{>->}[dll]^(.6){\beta^c} & \\
K \coprod {B'}^c \ar[r]_(.6){k \coprod 1_{{B'}^c}}& B & & C  & 
}
$$
and we set $n = k \coprod 1_{{B'}^c}$.
When $\mathbb C$ is a $\tau$-pretopos the functor $\Sigma \colon \mathsf{PrOrd}(\mathbb C) \rightarrow \mathsf{Stab}(\mathbb C)$ sends $\mathcal Z$-cokernels to cokernels (Corollary $7.13$ in \cite{BCG}): this implies that 
\begin{eqnarray}
< \beta , g > & = & \mathsf{coker} (< \alpha , f > ) \nonumber \\
& = &  \mathsf{coker} (\Sigma (n) ) \nonumber \\
& =& \Sigma ({\mathcal Z}{\rm{-}}\mathsf{coker} (n)). \nonumber
\end{eqnarray}
 The construction of the cokernel $\mathsf{coker}(< \alpha, k> )$ as in Lemma $7.11$ of \cite{BCG} shows that 
the $\mathcal Z$-cokernel $q \colon B \rightarrow Q$ of $n$ is such that 
$$\Sigma (q) = \mathsf{coker}(< 1_{K \coprod {B'}^c}, k \coprod 1_{{B'}^c}>) = \mathsf{coker}(\Sigma (n)).$$ Since the sequence \eqref{exact-seq-stab1} is exact, $< \beta, g >$ is isomorphic to $\Sigma (q)$, and the sequence \eqref{exact-seq-stab1} is isomorphic to the exact sequence
$$
\xymatrix{\Sigma (K \coprod {B'}^c) \ar@{.>}[r]^-{\Sigma (n)} & \Sigma(B) \ar@{.>}[r]^-{\Sigma (q)}& \Sigma (Q), }
$$
as desired.
\end{proof}
\begin{corollary}
When $\mathbb C$ is a $\tau$-pretopos, under the assumptions of Theorem \ref{universal}, the functor $\overline{G}$ preserves the short exact sequences whenever the functor $G$ sends short $\mathcal Z$-exact sequences to short exact sequences.
\end{corollary}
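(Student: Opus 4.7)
The plan is to reduce to the $\tau$-pretopos structure theorem for short exact sequences in $\mathsf{Stab}(\mathbb C)$ proved above and then invoke the factorization $\overline{G}\circ\Sigma=G$ from Theorem \ref{universal}. Concretely, let
$$
\xymatrix{ A \ar@{.>}[r]^{\langle\alpha,f\rangle} & B \ar@{.>}[r]^{\langle\beta,g\rangle} & C }
$$
be any short exact sequence in $\mathsf{Stab}(\mathbb C)$. Since $\mathbb C$ is a $\tau$-pretopos, Proposition \ref{Stab-has-kernels} yields a short $\mathcal Z$-exact sequence
$$
\xymatrix{ A' \ar[r]^{u} & B' \ar[r]^{v} & C' }
$$
in $\mathsf{PreOrd}(\mathbb C)$ together with an isomorphism of exact sequences between the given one and its image $\Sigma(u),\Sigma(v)$ under $\Sigma$.

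Now apply $\overline{G}$. By Theorem \ref{universal} we have $\overline{G}\circ\Sigma=G$, so the resulting sequence in $\mathbb X$ is isomorphic to
$$
\xymatrix{ G(A') \ar[r]^{G(u)} & G(B') \ar[r]^{G(v)} & G(C'). }
$$
By the assumption on $G$ (that it sends short $\mathcal Z$-exact sequences in $\mathsf{PreOrd}(\mathbb C)$ to short exact sequences in $\mathbb X$), this latter sequence is short exact. Since any functor preserves isomorphisms and since the notion of short exact sequence is invariant under isomorphism, it follows that $\overline{G}$ applied to the original sequence is also short exact.

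There is essentially no obstacle here beyond Proposition \ref{Stab-has-kernels}: the whole argument is just the observation that on the nose, $\overline{G}$ on short exact sequences of $\mathsf{Stab}(\mathbb C)$ is determined by $G$ on short $\mathcal Z$-exact sequences of $\mathsf{PreOrd}(\mathbb C)$, via the canonical $\Sigma$-presentation. The only subtle point worth recording is that $G$ is already known to send the \emph{canonical} short $\mathcal Z$-exact sequences of the pretorsion theory $(\mathsf{Eq}(\mathbb C),\mathsf{ParOrd}(\mathbb C))$ to short exact sequences (this is built into the definition of torsion theory functor), but here we use the stronger hypothesis that $G$ does so for \emph{all} short $\mathcal Z$-exact sequences, which is exactly what is needed to feed into the representation provided by Proposition \ref{Stab-has-kernels}.
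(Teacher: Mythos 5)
Your proof is correct and follows exactly the route the paper intends: the paper's own proof is the one-line remark that the corollary ``follows immediately from Proposition \ref{Stab-has-kernels}'', and your argument simply spells out that reduction (every short exact sequence in $\mathsf{Stab}(\mathbb C)$ is, up to isomorphism, $\Sigma$ of a short $\mathcal Z$-exact sequence, and $\overline{G}\circ\Sigma = G$). Your closing observation that the hypothesis on \emph{all} short $\mathcal Z$-exact sequences is genuinely stronger than what the torsion theory functor axioms give is also accurate.
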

\begin{proof}
This follows immediately from Proposition \ref{Stab-has-kernels}.
\end{proof}
\begin{corollary}
When $\mathbb C$ is a $\tau$-pretopos, under the assumptions of Theorem \ref{universal}, the functor $\overline{G}$ preserves cokernels whenever the functor $G$ sends $\mathcal Z$-cokernels to cokernels.
\end{corollary}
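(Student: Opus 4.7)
My plan is to imitate the proof of the previous corollary but to work one cokernel at a time, using the $\tau$-pretopos assumption to realise every cokernel in $\mathsf{Stab}(\mathbb C)$ as the $\Sigma$-image of a $\mathcal Z$-cokernel in $\mathsf{PreOrd}(\mathbb C)$. Given a morphism $\langle \alpha, f \rangle \colon A \to B$ in $\mathsf{Stab}(\mathbb C)$ with cokernel $c \colon B \to Q$, I will first invoke Lemma~\ref{existence-cokernel} to obtain an isomorphism $c \cong \Sigma(q)$, where $q \colon B \to Q$ denotes the $\mathcal Z$-cokernel of $f$ in $\mathsf{PreOrd}(\mathbb C)$; the hypothesis that all the required $\mathcal Z$-cokernels exist is automatic when $\mathbb C$ is a $\tau$-pretopos, exactly as is used in the proof of Proposition~\ref{Stab-has-kernels} via Corollary~$7.13$ of \cite{BCG}. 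Applying $\overline{G}$ and using the identity $\overline{G} \circ \Sigma = G$ from Theorem~\ref{universal}, one then obtains $\overline{G}(c) \cong G(q)$, which by the hypothesis on $G$ is precisely the cokernel of $G(f)$ in $\mathbb X$.

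To close the argument I would identify $\mathsf{coker}(G(f))$ with $\mathsf{coker}(\overline{G}(\langle \alpha, f \rangle))$. Unwinding the construction of $\overline{G}$ carried out in the proof of Theorem~\ref{universal}, and writing $A = A' \coprod {A'}^c$ in $\mathsf{PreOrd}(\mathbb C)$, the morphism $\overline{G}(\langle \alpha, f \rangle)$ is the copairing $[G(f), 0] \colon G(A') \coprod G({A'}^c) \to G(B)$. Since $\mathbb X$ is pointed, a morphism $h$ out of $G(B)$ satisfies $h \cdot [G(f), 0] = 0$ if and only if $h \cdot G(f) = 0$, so $\mathsf{coker}([G(f), 0]) = \mathsf{coker}(G(f))$. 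Combining this with the previous paragraph yields $\overline{G}(c) \cong \mathsf{coker}(\overline{G}(\langle \alpha, f \rangle))$, as required.

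I do not expect a substantial obstacle here: the bulk of the work was already done in Proposition~\ref{Stab-has-kernels} and in the explicit construction of $\overline{G}$ on morphisms. The only step that is not entirely routine is the identification of $\mathsf{coker}([G(f), 0])$ with $\mathsf{coker}(G(f))$, which however reduces to a one-line observation about copairings with a zero component; all the rest is a transport of the corresponding statement from $\mathsf{PreOrd}(\mathbb C)$ along the universal factorisation $\overline{G} \circ \Sigma = G$.
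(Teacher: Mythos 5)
Your proposal is correct and follows essentially the same route as the paper: realise the cokernel in $\mathsf{Stab}(\mathbb C)$ as $\Sigma$ of the $\mathcal Z$-cokernel $q$ of $f$, use the hypothesis on $G$ to get $\overline{G}(q)=G(q)=\mathsf{coker}(G(f))$, and then observe that $\mathsf{coker}([G(f),0])=\mathsf{coker}(G(f))$ because a morphism out of $G(B)$ annihilates the copairing $[G(f),0]$ exactly when it annihilates $G(f)$. The paper spells out this last identification with an explicit diagram and the universal property, but the content is identical to your one-line observation about copairings with a zero component.
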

\begin{proof}
Let $< \alpha, f > \colon A \rightarrow B$ be a morphism in $\mathsf{Stab}(\mathbb C)$ and $q \colon B \rightarrow C$ its cokernel. As we have noticed in the proof of Proposition \ref{Stab-has-kernels}, this cokernel is the image by $\Sigma$ of the $\mathcal Z$-cokernel of $f$, i.e. $q = \Sigma ({\mathcal Z}$-$\mathsf{coker}(f))$. In the category $\mathbb X$ we have that $\overline{G} (q) = G(q) = \mathsf{coker}(G(f)) = \mathsf{coker}(\overline{G}(f)).$
Consider then the following diagram 
$$
\xymatrix@=35pt{
 & & G(A')  \ar[ld]_{G(\alpha)} \ar[dr]^{G(f)}  & & X  &  \\
  & G(A) = G(A') \coprod G({A'}^c)   \ar[rr]^-{[G(f), 0]} & &G(B) \ar[r]^{G(q)} \ar[ru]^x & G(Q)  \ar@{.>}[u]_{\exists ! y} \\
   & & G({A'}^c) \ar[ul]^{G(\alpha^c)} \ar[ru]_{0} & & & 
 }
$$
One sees that $G(q) G(f) =0$, since $G(q) = \mathsf{coker}(G(f))$ and, obviously, $G(q) 0= 0$, hence $G(q) [G(f), 0] = 0$. Then, if $x[G(f),0]=0$ we get 
$$x G(f) = x [G(f),0] G(\alpha) = 0 G(\alpha)=0,$$
yielding a unique factorisation $y$ of $x$ through $G(q) = \mathsf{coker}(G(f))$. It follows that $G(q) = \mathsf{coker}([G(f),0])$, and this implies that
$$\overline{G}(q) = G(q) = \mathsf{coker}([G(f), 0]) = \mathsf{coker} (\overline{G}(< \alpha, f >)).$$
\end{proof}

\begin{Prop}
If in Theorem \ref{universal} we also assume that 
\begin{itemize}
\item $\mathbb C$ is a $\tau$-pretopos,
\item $\mathbb X$ has finite coproducts that are disjoint and pre-universal,
\item $G \colon \mathsf{PreOrd}(\mathbb C) \rightarrow \mathbb X$ sends $\mathcal Z$-kernels to kernels,
\end{itemize}
then the functor $\overline{G} \colon \mathsf{Stab}(\mathbb C) \rightarrow \mathbb X$ preserves kernels.
\end{Prop}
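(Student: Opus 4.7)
The plan is to compute the kernel of a morphism $\langle \alpha, f \rangle \colon A \to B$ in $\mathsf{Stab}(\mathbb C)$ by means of Proposition \ref{kernels-in-Stab} and then to reproduce the analogous coproduct decomposition in $\mathbb X$ thanks to Lemma \ref{sum-of-kernels}, which is available precisely because of the disjointness and pre-universality of binary coproducts in $\mathbb X$.

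Concretely, one writes $A = A' \coprod {A'}^c$ and lets $k \colon K \to A'$ be the $\mathcal Z$-kernel of $f$ in $\mathsf{PreOrd}(\mathbb C)$. By Proposition \ref{kernels-in-Stab} the kernel of $\langle \alpha, f \rangle$ in $\mathsf{Stab}(\mathbb C)$ is $\Sigma (k \coprod 1_{{A'}^c}) \colon K \coprod {A'}^c \to A' \coprod {A'}^c = A$. Applying $\overline G$ and using $\overline G \circ \Sigma = G$ together with the preservation of finite coproducts by $G$, one obtains
$$\overline G (\Sigma (k \coprod 1_{{A'}^c})) = G(k \coprod 1_{{A'}^c}) = G(k) \coprod 1_{G({A'}^c)}.$$

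On the other hand, by Proposition \ref{justification} and the definition of $\overline G$ on morphisms given in the proof of Theorem \ref{universal}, $\overline G (\langle \alpha, f \rangle) = [G(f), 0]$, which coincides with the morphism $G(f) \coprod 0 \colon G(A') \coprod G({A'}^c) \to G(B) \coprod 0 = G(B)$. Now $G(k)$ is a kernel of $G(f)$ in $\mathbb X$ by the assumption that $G$ sends $\mathcal Z$-kernels to kernels, while $1_{G({A'}^c)}$ is a kernel of the zero morphism $G({A'}^c) \to 0$ (it is $G$ applied to the $\mathcal Z$-kernel $1_{{A'}^c}$ of the trivial morphism ${A'}^c \to 1$, and $G$ sends trivial morphisms to zero morphisms since it is a torsion theory functor). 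Applying Lemma \ref{sum-of-kernels} in $\mathbb X$ then yields that $G(k) \coprod 1_{G({A'}^c)}$ is a kernel of $G(f) \coprod 0 = \overline G (\langle \alpha, f\rangle)$, which is exactly the desired conclusion.

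The main obstacle, and what really motivates the additional hypotheses in the statement, is the step invoking Lemma \ref{sum-of-kernels}: without the disjointness and pre-universality of binary coproducts in $\mathbb X$, the coproduct of two kernels need not be a kernel of the coproduct morphism, and the explicit description of the kernel of $\langle \alpha, f \rangle$ provided by Proposition \ref{kernels-in-Stab} would then have no counterpart on the $\mathbb X$-side. The hypothesis that $G$ sends $\mathcal Z$-kernels to kernels is then what transports the description through $\overline G$.
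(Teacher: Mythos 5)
Your proposal is correct and follows essentially the same route as the paper's own proof: both compute the kernel of $\langle \alpha, f\rangle$ via Proposition \ref{kernels-in-Stab} as $\Sigma(k \coprod 1_{{A'}^c})$, identify $\overline{G}(\langle \alpha, f\rangle)$ with $G(f)\coprod 0$, and conclude by Lemma \ref{sum-of-kernels} that $G(k)\coprod 1_{G({A'}^c)}$ is the kernel of $G(f)\coprod 0$. Your extra remark justifying why $1_{G({A'}^c)}$ is a kernel of the zero morphism (as the image of the $\mathcal Z$-kernel of the trivial morphism ${A'}^c \to 1$) is a welcome elaboration of a step the paper states without comment.
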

\begin{proof}
Consider a morphism $< \alpha, f > \colon A \rightarrow B$ in $\mathsf{Stab}(\mathbb C)$ and its kernel $k \coprod 1_{{A'}^c}$ in $\mathsf{Stab}(\mathbb C)$ as in Proposition \ref{kernels-in-Stab}
$$
\xymatrix@=35pt{
K \ar[rr]^k \ar[d]_{s_1} && A' \ar@{>->}[dl]_{\alpha} \ar[dr]^(.7)f  & &{A'}^c \ar[lllld]_(.8){s_2} \ar[dl]^{\alpha'} \\
K \coprod {A'}^c \ar[r]_-{k \coprod 1_{{A'}^c}}&A' \coprod {A'}^c \ar@{.>}[rr]_{< \alpha , f > } & &B, }
$$
where $k$ is the $\mathcal Z$-kernel of $f$. From the assumptions it follows that
$$\overline{G} (k) = G(k) = \mathsf{ker}(G(f)) = \mathsf{ker}(\overline{G}(f)),$$
and one also has 
$$1_{\overline{G}({A'}^c)} = \mathsf{ker} (0), \qquad \mathrm{where} \, \, 0 \colon \overline{G}({A'}^c) \rightarrow 0.$$
From Lemma \ref{sum-of-kernels} and Theorem \ref{universal} we get the equalities
$$\overline{G} (k \coprod 1_{{A'}^c}) = \overline{G}(k) \coprod 1_{\overline{G}({A'}^c)} = \mathsf{ker} (\overline{G}(f) \coprod 0) = \mathsf{ker}(\overline{G} (< \alpha, f> )),$$
hence $\overline{G}$ preserves the kernel of $< \alpha, f >$.
\end{proof}

Thus, in case of a $\tau$-pretopos, we have the following result.

\begin{theorem}\label{universal-tau}
Let $\mathbb C$ be a $\tau$-pretopos. The functor $\Sigma \colon \mathsf{PreOrd}(\mathbb C) \rightarrow \mathsf{Stab}(\mathbb C)$ is universal among all finite coproduct preserving \emph{torsion theory functors} $G \colon \mathsf{PreOrd}(\mathbb C)$ $\rightarrow \mathbb X$, where $\mathbb X$ has a torsion theory $({\mathcal T}, {\mathcal F})$, and it has binary coproducts that are disjoint and pre-universal. Consider any finite coproduct preserving torsion theory functor $G \colon \mathsf{PreOrd}(\mathbb C) \rightarrow \mathbb X$ that sends $\mathcal Z$-kernels and $\mathcal Z$-cokernels (then in particular short $\mathcal Z$-exact sequences) to kernels, cokernels (and short exact sequences). The functor $G$ then factors uniquely through $\Sigma$
$$ \xymatrix{\mathsf{PreOrd}(\mathbb C)  \ar[rr]^{\Sigma} \ar[rd]_{\forall G} & & \mathsf{Stab}(\mathbb C) \ar@{.>}[ld]^{\exists ! \overline{G}} \\
& \mathbb X. &
}
$$
and the induced functor $\overline{G}$ preserves finite coproducts, is a torsion theory functor that preserves kernels and cokernels (then in particular short exact sequences).
\end{theorem}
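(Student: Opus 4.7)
The plan is to observe that Theorem~\ref{universal-tau} is essentially an assembly of all the results established in this section, and to argue that the hypotheses have been tailored so that each of the three ``extra'' properties (torsion theory functor, kernel preservation, cokernel preservation) of $\overline{G}$ follows from a statement already in hand.

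First I would apply Theorem~\ref{universal} directly: the hypothesis that $\mathbb X$ is pointed with a torsion theory $({\mathcal T},{\mathcal F})$ and finite coproducts, together with the fact that $G$ is a finite coproduct preserving torsion theory functor, yields the unique factorisation $\overline{G}\cdot \Sigma = G$, as well as the facts that $\overline{G}$ preserves finite coproducts and is itself a torsion theory functor. The extra hypotheses of Theorem~\ref{universal-tau} (disjointness and pre-universality of coproducts in $\mathbb X$, and the assumption that $G$ sends $\Cal Z$-kernels to kernels and $\Cal Z$-cokernels to cokernels) are not needed at this step; they will intervene only to upgrade $\overline{G}$ to a functor preserving kernels and cokernels.

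Next I would treat the preservation of kernels. This is exactly the content of the Proposition preceding the statement of Theorem~\ref{universal-tau}: under the assumption that $\mathbb X$ has binary coproducts that are disjoint and pre-universal and that $G$ sends $\Cal Z$-kernels to kernels, the induced $\overline{G}$ preserves kernels. The key ingredient in that argument is the explicit description of kernels in $\mathsf{Stab}(\mathbb C)$ given by Proposition~\ref{kernels-in-Stab} as $\Sigma(k\coprod 1_{{A'}^c})$, together with Lemma~\ref{sum-of-kernels} which requires precisely disjoint and pre-universal binary coproducts in $\mathbb X$ in order to conclude that $\overline{G}(k)\coprod 1_{\overline{G}({A'}^c)}$ is the kernel of $\overline{G}(f)\coprod 0$.

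For the preservation of cokernels I would invoke the Corollary immediately preceding the statement, which uses the fact (proved inside Proposition~\ref{Stab-has-kernels} and Lemma~\ref{existence-cokernel}) that every cokernel in $\mathsf{Stab}(\mathbb C)$ is, up to isomorphism, the image by $\Sigma$ of a $\Cal Z$-cokernel in $\mathsf{PreOrd}(\mathbb C)$; here the $\tau$-pretopos hypothesis on $\mathbb C$ is what guarantees the existence of such $\Cal Z$-cokernels and is therefore essential. Combining these with the earlier Corollary on short exact sequences (which rests on Proposition~\ref{Stab-has-kernels}, and again needs $\mathbb C$ to be a $\tau$-pretopos), one concludes that $\overline{G}$ preserves short exact sequences as well. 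There is no real obstacle in this proof beyond bookkeeping, since the heavy lifting has been done in the preceding propositions and corollaries: the proof is a matter of checking that each property in the conclusion corresponds to a hypothesis that activates the appropriate previous result, and then citing them in order.
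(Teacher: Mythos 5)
Your proposal is correct and matches the paper's treatment exactly: the paper states Theorem~\ref{universal-tau} with no separate proof, presenting it precisely as the combination of Theorem~\ref{universal} (existence, uniqueness, coproduct preservation, torsion theory functor) with the preceding Proposition (kernel preservation) and Corollaries (cokernel and short exact sequence preservation). Your identification of which hypothesis activates which prior result is accurate.
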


\end{document}